\newtheorem{theorem}{Theorem}
\newtheorem{proposition}[theorem]{Proposition}
\newtheorem{lemma}[theorem]{Lemma}
\newtheorem{corollary}[theorem]{Corollary}
\theoremstyle{definition}
\newtheorem{definition}[theorem]{Definition}
\newtheorem{remark}[theorem]{Remark}
\numberwithin{theorem}{section}
\providecommand{\leftsquigarrow}{%
  \mathrel{\mathpalette\reflect@squig\relax}%
}
\newcommand{\reflect@squig}[2]{%
  \reflectbox{$\m@th#1\rightsquigarrow$}%
}
\renewcommand{\phi}{\varphi} 
\renewcommand{\vec}{\mathbf} 
\newcommand{\ol}[1]{\overline{#1}}
\newcommand{\emphbf}[1]{\textit{#1}}
\definecolor{sebgreen1}{rgb}{0.019,0.317,0.149}
\definecolor{sebgreen2}{rgb}{0.784,0.952,0.780}
\definecolor{chrisyellow1}{rgb}{1,0.73,0}
\definecolor{chrisyellow2}{rgb}{0.99,0.99,0.75}
\definecolor{sebdarkgreen}{rgb}{0.019,0.317,0.149}
\definecolor{sebgreen}{RGB}{36,157,55}
\definecolor{seblightgreen}{rgb}{0.784,0.952,0.780}
\definecolor{sebblue}{RGB}{0,123,194}
\definecolor{seblightblue}{RGB}{194,242,255}
\renewcommand{\AA}{\mathbb{A}}
\newcommand{\NN}{\mathbb{N}}
\newcommand{\ZZ}{\mathbb{Z}}
\newcommand{\OO}{\mathscr{O}}
\newcommand{\UU}{\mathcal{U}}
\newcommand{\Ann}{\mathrm{Ann}}
\newcommand{\AAnn}{\underline{\mathrm{Ann}}}
\newcommand{\Cond}{\mathrm{Cond}}
\newcommand{\Grass}{\mathrm{Gr}}
\newcommand{\moduli}{\mathcal{M}}
\newcommand{\Span}{\mathrm{Span}\,}
\newcommand{\Spec}{\mathrm{Spec}\,}
\renewcommand{\tilde}{\widetilde}
\newcommand{\Ter}{\mathrm{Ter}}
\newcommand{\Sch}{\mathrm{Sch}}
\newcommand{\Set}{\mathrm{Set}}
\theoremstyle{definition}
\title{Connectedness of the Moduli Stack of All Reduced Algebraic Curves}
\author{Sebastian Bozlee}
\date{June 2024}
\subjclass{14H10, secondary: 13E10}
\begin{document}

\begin{abstract}
Using moduli of equinormalized curves and Ishii's theory of territories, we prove that the moduli stack of all reduced $n$-pointed algebraic curves of fixed arithmetic genus is connected.
\end{abstract}

\maketitle

\section{Introduction}

Denote by $\UU_{g,n}$ the moduli stack of reduced but otherwise arbitrarily singular, connected, proper curves of genus $g$ with $n$ marked points. It is known to be representable by an algebraic stack, locally of finite type over $\Spec \ZZ$ with quasicompact and finite diagonal, see \cite[Theorem B.1]{smyth_zstable} and \cite[Tags 0D5A and 0E1G]{stacks-project}. We will call this the \emphbf{moduli stack of all curves}.

The moduli stack of all curves is not particularly ``nice:" it is known to possess many irreducible components, due to existence of non-smoothable curve singularities of various genera \cite{mumford_pathologies}, \cite[Theorem 1.11]{pinckham_thesis}. It is also known to be highly non-separated, as evidenced by the many alternate compactifications of the moduli stack of smooth pointed curves $\moduli_{g,n}$ \cite{schubert_pseudostable, hassett_weighted_curves, hassett_hyeon,fedorchuk_smyth}. In this article we show that it has at least one nice geometric property: it is connected.

\begin{theorem} \label{thm:connected}
The moduli stack of all reduced curves $\UU_{g,n}$ is connected and the fibers of $\UU_{g,n} \to \Spec \ZZ$ are geometrically connected for all $g,n \geq 0$.
\end{theorem}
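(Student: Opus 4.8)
The plan is to prove connectedness by exhibiting a way to connect every reduced curve $[C] \in \UU_{g,n}$ to a fixed universal ``anchor'' point via a chain of flat families living inside the moduli stack, and then to reduce geometric connectedness of the fibers over $\Spec \ZZ$ to the same deformation-theoretic mechanism. The natural anchor to aim for is a maximally degenerate stable-type curve — for instance a totally degenerate nodal curve (a union of rational components glued at nodes realizing genus $g$ with the $n$ markings distributed on it), since every point of $\moduli_{g,n}$ and its various compactifications can be specialized to such a curve, and these already lie in one known irreducible locus. The difficulty is that $\UU_{g,n}$ has many irreducible components coming from non-smoothable singularities, so I cannot simply invoke irreducibility of $\overline{\moduli}_{g,n}$; I must bridge \emph{between} components.

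\medskip

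First I would use the normalization to stratify. Given a reduced curve $C$, its normalization $\nu\colon \tilde C \to C$ is a disjoint union of smooth curves, and $C$ is recovered from $\tilde C$ together with gluing data along the preimages of the (possibly very singular) non-normal locus. The key input available from the abstract is \emph{moduli of equinormalized curves and Ishii's theory of territories}: I would exploit a flat family that, over a one-parameter base, degenerates any fixed singularity type to a more standard one while keeping the normalization fixed — equinormalized deformations move only the gluing/territory data. Concretely, Ishii's territories organize the ways of gluing a fixed normalization into a reduced curve along a conductor, and I expect the space of such gluings (the ``territory'' space) to be \emph{connected} — e.g. contractible or at least path-connected as a parameter space — because one can continuously or by a chain of flat specializations shrink all the gluing data toward the split (disconnected) limit or toward a standard nodal limit. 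This connectedness of the fiber of the ``equinormalization'' map is the technical heart.

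\medskip

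Second, with equinormalized fibers connected, I would connect curves with \emph{different} normalizations. Since the normalization $\tilde C$ is a smooth (possibly disconnected) pointed curve, it lives in a product of moduli of smooth pointed curves $\prod \moduli_{g_i, n_i}$, together with extra points recording the conductor. These moduli of smooth curves are irreducible (over $\Spec \ZZ$, geometrically irreducible by Deligne–Mumford), so I can move the normalization continuously within each component; and by adding/removing nodes I can change the number and genera of components — a node is a codimension-one degeneration realized by a flat family inside $\UU_{g,n}$, and smoothing/sprouting nodes lets me pass between different topological types while staying in the stack. Chaining these: degenerate all singularities to nodes via equinormalized territory families, then degenerate to a fixed totally nodal curve by contracting and sprouting, connects any $[C]$ to the anchor.

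\medskip

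The main obstacle I anticipate is proving that \emph{every} reduced singularity — including non-smoothable ones and those of high genus contribution $\delta$ — can be connected by a flat family within $\UU_{g,n}$ to a curve with only simpler singularities, \emph{without} changing the arithmetic genus $g$ or disconnecting the curve. Controlling the arithmetic genus is subtle because the genus is $g = \sum g_i + \dim_k (\nu_* \OO_{\tilde C} / \OO_C)$, so any move that simplifies the singularity must compensate in the gluing/genus bookkeeping; this is precisely where Ishii's territories should supply the right combinatorial/flat-family control, letting the contribution of the singular point flow into nodes or into the genera of components while $g$ stays fixed. Verifying flatness and properness of these connecting families over $\Spec \ZZ$ (rather than over a field) is the extra ingredient needed for the geometric connectedness of the fibers, and I would handle it by working with families defined integrally and invoking flatness over the discrete valuation ring at each prime.
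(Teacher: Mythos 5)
Your proposal assembles the right ingredients (equinormalized families, territories, connectedness of territory spaces, irreducibility of moduli of smooth curves), but its central step fails. In an equinormalized family the combinatorial invariants of the singularities cannot change: the connected component of $\Ter^\delta_{A(\vec{c})}$ containing a given curve is isomorphic to $\prod_{P \in \mathcal{P}} \Ter^{g(P)}_{A^+(\vec{c}|_P)}$ for a \emph{fixed} partition $\mathcal{P}$ and a \emph{fixed} genus function $g$, so every curve in that connected family has the same number of singular points, with the same branch structure and the same local genera; only the moduli of each singularity vary. Consequently ``degenerate all singularities to nodes via equinormalized territory families'' is impossible whenever some singularity has positive genus --- which is exactly the case of interest, since genus-zero singularities are ordinary multiple points and hence smoothable, so every non-smoothable singularity has positive genus. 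Your final paragraph proposes to compensate by letting the genus of a singular point ``flow into nodes or into the genera of components,'' but neither escape route exists here: the normalization (hence the component genera) is literally constant in an equinormalized family, and the local genus $g(P)$ is locally constant on the territory. Your heuristic for connectedness of the territory (shrinking the gluing data toward the split, disconnected limit) also does not work: the split algebra $B = A$ has corank $0$, so it does not lie in $\Ter^\delta$ at all. What actually gives connectedness is Ishii's theorem that the territory of a \emph{local} finite-dimensional algebra (here $A^+(\vec{c})$) is connected by chains of $\AA^1$'s.

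The missing idea is that you do not need to reach nodes; you only need each territory component to contain \emph{some} curve all of whose singularities are smoothable, with the same local genera. The partition singularities $X_{n_1,\ldots,n_m} = \Spec\left(k \oplus t_1^{n_1}k\llbracket t_1 \rrbracket \oplus \cdots \oplus t_m^{n_m}k\llbracket t_m \rrbracket\right)$ do this job: they realize every branch number and every genus, they give points of every non-empty $\Ter^g_{A^+(\vec{c})}$, and they are smoothable (one deforms them to an ordinary multiple point, which is smoothable). Once your arbitrary curve is connected, inside a single connected territory component, to a curve with only smoothable singularities, that curve lies in the closure of $\moduli_{g,n} \times \Spec k$, which is connected because $\moduli_{g,n}$ is irreducible (Deligne--Mumford). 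This makes your entire second stage --- moving the normalization inside $\prod_i \moduli_{g_i,n_i}$ and sprouting or contracting nodes --- unnecessary, which is fortunate, since it was the vaguest part of the plan. Finally, geometric connectedness of the fibers of $\UU_{g,n} \to \Spec \ZZ$ needs no integral models or DVR-flatness: one runs the argument over an arbitrary algebraically closed field $k$, uses that every connected component of an algebraic stack locally of finite type over $k$ has a $k$-point to conclude each geometric fiber is connected, and then deduces connectedness of $\UU_{g,n}$ itself from the fact that $\moduli_{g,n}$ is connected and meets every fiber.
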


Intuitively, our method is to construct a path in the moduli stack of all curves from any curve $X$ to a curve $X_{sm}$ with only smoothable singularities. Perhaps surprisingly, this can be done by varying the singularities of $X$ while leaving the normalization of $X$ fixed. We use in an essential way Ishii's theory of territories \cite{ishii_moduli_subrings} and its further development in \cite{bozlee_guevara_smyth}.

\section{Background: the theory of territories}

Fix an algebraically closed field $k$ for the remainder of the paper. In this section, we will relate reduced curve singularities to subalgebras of finite dimensional algebras, then we will study the moduli of such subalgebras. We begin with a lemma on subalgebras of $k^m$. A proof of part (i) is provided in \cite[Lemma 3.9]{bozlee_guevara_smyth}.

\begin{lemma} \label{lem:subalges_of_kn}
Let $m$ be a positive integer. Given a subset $S \subseteq \{ 1, \ldots, m \}$, write $1_S$ for the element of $k^m$ whose $i$th entry is 1 if $i \in S$ and 0 otherwise.
Then
\begin{enumerate}
  \item The $k$-subalgebras of $k^m$ are precisely the subsets of the form
\[
  B_{\mathcal{P}} = \Span \{ 1_P \mid P \in \mathcal{P} \}
\]
for some set partition $\mathcal{P}$ of $\{1, \ldots, m \}$.
  \item The only local subalgebra of $k^m$ is $k \cdot (1, \ldots, 1)$.
\end{enumerate} 
\end{lemma}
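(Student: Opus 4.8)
The plan is to prove both inclusions of (i) and then read off (ii) from the resulting description. For the ``easy'' inclusion, I would check directly that each $B_{\mathcal{P}}$ is a subalgebra. The elements $1_P$ for $P \in \mathcal{P}$ are orthogonal idempotents: each satisfies $1_P^2 = 1_P$ because its coordinates lie in $\{0,1\}$, while $1_P \cdot 1_Q = 1_{P \cap Q} = 0$ for distinct parts since the parts are disjoint, and $\sum_{P \in \mathcal{P}} 1_P = (1,\ldots,1)$ is the unit of $k^m$. Expanding a product of two linear combinations of the $1_P$ and applying these relations shows $B_{\mathcal{P}}$ is closed under multiplication; as it is visibly a $k$-subspace containing the unit, it is a $k$-subalgebra.

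The substance of the lemma is the reverse inclusion: every subalgebra $B \subseteq k^m$ equals some $B_{\mathcal{P}}$. The key step is to show that $B$ is spanned by the indicator idempotents it happens to contain. Given $b = (b_1,\ldots,b_m) \in B$ with distinct coordinate values $\lambda_1,\ldots,\lambda_s$, Lagrange interpolation furnishes polynomials $p_t \in k[x]$ with $p_t(\lambda_u) = \delta_{tu}$. Since $B$ is closed under applying polynomials, $p_t(b) \in B$, and a coordinatewise check gives $p_t(b) = 1_{L_t}$ where $L_t = \{ i : b_i = \lambda_t \}$ is a level set of $b$. As $b = \sum_t \lambda_t 1_{L_t}$, it follows that $B$ is spanned by idempotents of the form $1_S$ with $1_S \in B$.

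I would then organize these indicators. Let $\mathcal{S} = \{ S : 1_S \in B \}$. Using that $1_{\{1,\ldots,m\}} = (1,\ldots,1) \in B$ together with the identities $1_{S^c} = 1 - 1_S$, $\; 1_{S \cap T} = 1_S \cdot 1_T$, and $1_{S \cup T} = 1_S + 1_T - 1_S \cdot 1_T$, the family $\mathcal{S}$ is closed under complement, intersection, and union, hence is a Boolean subalgebra of the power set of $\{1,\ldots,m\}$. Its atoms (the minimal nonempty members) form a set partition $\mathcal{P}$ of $\{1,\ldots,m\}$, every $1_S$ is a sum of the atom indicators $1_P$, and therefore $B = \Span\{1_P \mid P \in \mathcal{P}\} = B_{\mathcal{P}}$, completing (i). The one delicate point is the interpolation step showing $B$ is generated by idempotents; after that the Boolean bookkeeping is routine. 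An equivalent slicker route is to note that $B$, being reduced and finite-dimensional over $k = \bar k$, is isomorphic to $k^r$ and so has $r = \dim_k B$ primitive orthogonal idempotents, which must be of the form $1_{P_i}$ for a partition, forcing $B = B_{\mathcal{P}}$ by a dimension count.

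Finally, (ii) is immediate from (i): under the identifications above $B_{\mathcal{P}} \cong k^{\lvert \mathcal{P} \rvert}$, which is local if and only if it has no nontrivial idempotents, i.e.\ if and only if $\lvert \mathcal{P} \rvert = 1$. The unique one-block partition $\mathcal{P} = \{ \{1,\ldots,m\} \}$ gives $B_{\mathcal{P}} = k \cdot (1,\ldots,1)$, which is thus the only local subalgebra.
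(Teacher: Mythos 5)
Your proposal is correct and follows essentially the same route as the paper: your Lagrange interpolation step is precisely the paper's explicit formula $1_{S_i} = \prod_{j \neq i}(b - x_j)/\prod_{j \neq i}(x_i - x_j)$ showing the indicator of each level set lies in $B$, and your organization of the indicators into a Boolean algebra whose atoms give the partition matches the paper's construction of the sets $S_i = \bigcap_{i \in S} S$, with part (ii) argued identically via idempotents splitting off product factors. The only difference is cosmetic (you phrase the bookkeeping in terms of atoms of a Boolean algebra, and offer an optional structure-theoretic shortcut), so no further comment is needed.
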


Let us recall the definition of the conductor ideal.

\begin{definition}
If $B$ is a subring of $A$, the \emphbf{conductor} $\Cond_{A/B}$ of $A$ over $B$ is defined by $\Cond_{A/B} = \Ann_B(A/B)$. It is not difficult to check $\Cond_{A/B}$ is the largest ideal of $B$ that is also an ideal of $A$.

If $\nu : \tilde{X} \to X$ is the normalization of a reduced curve $X$ over $k$, we set the \emphbf{conductor of $\tilde{X}$ over $X$} to be the quasicoherent $\OO_{\tilde{X}}$-module $\Cond_{\tilde{X}/X}$ determined by the rule that if $U = \Spec(A) \subseteq X$ is an affine open with preimage $V = \Spec(B) \subseteq \tilde{X}$, then $\Cond_{\tilde{X}/X}(V) = \Ann_{B}(A/B)$, considered as an $A$-module. More globally, we may write
\[
  \Cond_{\tilde{X}/X} = \nu^{-1}\AAnn_{\OO_X}(\nu_*{\OO_{\tilde{X}}}/\OO_X) \otimes_{\nu^{-1}\nu_*\OO_{\tilde{X}}} \OO_{\tilde{X}}.
\]
\end{definition}

With the definition of conductor in hand, we show that the complete local rings of reduced curve singularities are only certain subalgebras of products of power series rings.

\begin{lemma} \label{lem:curve_sings_to_power_series}
 Let $\widehat{\OO}$ be the complete local ring at a closed point of a reduced algebraic curve. Then $\widehat{\OO}$ is isomorphic to a $k$-subalgebra $B$ of $A = \prod_{i = 1}^m k\llbracket t_i \rrbracket$ for some $m$ such that
    \begin{enumerate}
        \item $\Cond_{A/B}$ is the $A$-ideal $(t_1^{c_1}, \ldots, t_m^{c_m})$ for some list of positive integers $c_1, \ldots, c_m$;
        \item $B \cap \left(\prod_{i=1}^m k \cdot 1\right) = k \cdot (1, \ldots, 1)$, i.e., the only tuples of constants contained in $B$ are those with equal coordinates.
    \end{enumerate}
    Moreover, the subalgebra $B$ depends only on a $k$-isomorphism of the normalization of $\widehat{\OO}$ with $A$. Conversely, any algebra $B$ satisfying these hypotheses is a complete local ring at a closed point of a reduced algebraic curve.
\end{lemma}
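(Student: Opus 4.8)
The plan is to prove the forward direction by translating the geometry into commutative algebra, and the converse by an explicit algebraization. For the forward direction, I would first record that, since $k$ is algebraically closed and the curve is of finite type, $\widehat{\OO}$ is an excellent, reduced, complete local Noetherian $k$-algebra of dimension $1$ with residue field $k$. Its normalization is therefore a finite $\widehat{\OO}$-module splitting as a product of complete DVRs, one per branch; each is equicharacteristic with residue field $k$, so Cohen structure theory identifies each with $k\llbracket t_i\rrbracket$. Fixing an isomorphism of the normalization with $A=\prod_{i=1}^m k\llbracket t_i\rrbracket$ and letting $B$ be the image of the canonical inclusion $\widehat{\OO}\hookrightarrow$ (its normalization) — injective because $\widehat{\OO}$ is reduced — produces the desired subalgebra. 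It manifestly depends only on the chosen isomorphism, since the map into the normalization is canonical. For (i), the key point is that $A/B$ is a finite $B$-module supported only at the maximal ideal, hence of finite length, so $\Cond_{A/B}=\Ann_A(A/B)$ has finite colength in $A$; the finite-colength ideals of $\prod_i k\llbracket t_i\rrbracket$ are exactly the products $\prod_i(t_i^{c_i})=(t_1^{c_1},\dots,t_m^{c_m})$, giving the stated form.

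Positivity of the $c_i$ is where locality enters: if $c_i=0$ the conductor contains the idempotent $e_i\in B$, impossible in the local ring $B$ when $m\ge 2$; for $m=1$ this occurs precisely at a smooth point (the degenerate case $c_1=0$), so for genuine singularities all $c_i\ge 1$. For (ii), I would use that $B$ is local so that every maximal ideal of $A$ contracts to $\mathfrak{m}_B$, whence $\mathfrak{m}_B\subseteq\bigcap_i\mathfrak{m}_i=J$, the radical $(t_1,\dots,t_m)$. Given a constant tuple $(x_1,\dots,x_m)\in B$ with residue $\lambda\in k$, the difference $(x_1-\lambda,\dots,x_m-\lambda)$ lies in $\mathfrak{m}_B\subseteq J$; since each coordinate is a constant lying in $(t_i)$ it vanishes, forcing $x_i=\lambda$ for all $i$, which is (ii).

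The converse is the substantive part. First I would check that $B$ inherits the right abstract properties: it is reduced as a subring of the reduced ring $A$, and $1$-dimensional since $A$ is finite over it (as $\Cond_{A/B}\subseteq B$ has finite colength). It is Noetherian by Eakin--Nagata, and complete because it is the preimage of a subspace of the discrete quotient $A/\Cond_{A/B}$, hence a closed subring of the complete ring $A$. By (ii) and Lemma \ref{lem:subalges_of_kn}, the only idempotents of $B$ are $0$ and $1$, so the complete semilocal ring $B$ has a single factor in its decomposition into complete local rings and is therefore local; its residue field embeds as a field subalgebra of $k^m=A/J$, forcing it to be the diagonal $\cong k$ by Lemma \ref{lem:subalges_of_kn}(ii). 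Since $A$ is normal, finite over $B$, and birational to it (the conductor contains the nonzerodivisor $(t_1^{c_1},\dots,t_m^{c_m})$), $A$ is the normalization of $B$.

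It remains to algebraize $B$, and this is the step I expect to be the main obstacle. I would pass to the polynomial model $A_0=\prod_i k[t_i]$ with its finite-colength conductor $\mathfrak{c}_0=\prod_i(t_i^{c_i})$, using the canonical identification $A_0/\mathfrak{c}_0\cong A/\Cond_{A/B}$. Setting $R=\{f\in A_0 : f\bmod\mathfrak{c}_0\in B/\Cond_{A/B}\}$ gives a reduced $1$-dimensional $k$-algebra of finite type (finite type by Artin--Tate, as $A_0$ is finite over $R$), and since $B/\Cond_{A/B}$ is local the $m$ origins of $\Spec A_0$ all contract to a single closed point $p$ of the reduced affine curve $X=\Spec R$. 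The crux is then to verify $\widehat{\OO_{X,p}}\cong B$: because $\mathfrak{c}_0$ has finite colength, completion at $p$ identifies $A_0$ with $A$ and leaves $R/\mathfrak{c}_0=B/\Cond_{A/B}$ unchanged, so $\widehat{R_{\mathfrak n}}$ is exactly the preimage of $B/\Cond_{A/B}$ in $A$, namely $B$. Checking these completion statements carefully — that completion commutes with forming the subring because everything is constant modulo a finite-colength conductor — is the technical heart of the argument.
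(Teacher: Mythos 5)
Your proposal is correct and follows essentially the same route as the paper: normalize via the Cohen structure theorem and take $B$ to be the image, identify the conductor as a finite-colength ideal of a product of DVRs to get $(t_1^{c_1},\ldots,t_m^{c_m})$, and algebraize the converse by taking the preimage of $B$ modulo the conductor inside $\prod_i k[t_i]$ --- your ring $R$ is literally the paper's $\phi^{-1}(B)$. The only differences are in the bookkeeping: you prove the completion identification $\widehat{R_{\mathfrak{n}}} \cong B$ (which the paper asserts without argument), you use Artin--Tate/Eakin--Nagata where the paper exhibits explicit generators, and you flag the smooth-point edge case $m=1$, $c_1=0$, which the paper's idempotent argument silently passes over.
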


\begin{proof}
Let $\tilde{\OO}$ be the integral closure of $\widehat{\OO}$. Then, by the Cohen Structure Theorem, there is a $k$-isomorphism $\phi : \tilde{\OO} \to A = \prod_{i = 1}^m k\llbracket t_i \rrbracket$ for some integer $m$. Set $B = \phi(\widehat{\OO})$.
It remains to show (i) and (ii).

For (i), let $I = \Cond_{A/B}$. Since $A$ is a product, $I = I_1 \times \cdots \times I_m$ where $I_i$ is an ideal of the DVR $k\llbracket t_i \rrbracket$ for each $i = 1, \ldots, m$. By \cite[Chapter VIII, Proposition 1.16]{altman_kleiman}, $\dim_k(A/I)$ is finite, so it must be that $I_i = (t_i^{c_i})$ for some non-negative integer $c_i$ for each $i$. If $c_i$ were $0$ for any $i$, then $B$ would contain an idempotent other than 0 or 1, contradicting that $\Spec B$ is connected. We conclude that (i) holds.

For (ii), observe that $R = B \cap (\prod_{i = 1}^m k \cdot 1)$ is a $k$-subalgebra of $k^m$. Notice also that $R$ is isomorphic to $B / ((t_1, \ldots, t_m) \cap B)$, the quotient of a local ring by its maximal ideal, so $R$ is local. By Lemma \ref{lem:subalges_of_kn}, part (ii), $R = k \cdot (1, \ldots, 1)$.

For the converse, let $B$ be a subalgebra satisfying (i) and (ii). Let $\phi : \prod_{i=1}^m k[t_i] \to A$ be the natural inclusion. Consider the scheme $C = \Spec \phi^{-1}(B)$. It is one-dimensional since $\Spec \prod_{i=1}^m k[t_i] \to C$ is a finite map, reduced since $\phi^{-1}(B)$ is a subring of $\prod_{i=1}^m k[t_i]$, and finite type since $\phi^{-1}(B)$ is generated by elements of degree less than $\max \left\{2c_1, \ldots, 2c_m \right\}$. Altogether, $C$ is a reduced curve over $k$. Moreover, $B$ is its completion at the ideal $(t_1, \ldots, t_m) \cap C$. This must be the completion of $C$ at a maximal ideal since, by (ii),
\[
  C / ((t_1, \ldots, t_m) \cap C) \cong B / ((t_1, \ldots, t_m) \cap B) \cong B \cap \left(\prod_{i=1}^m k \cdot 1\right)
\]
is a field.
\end{proof}

Several important invariants of reduced curve singularities emerge in the statement of the lemma.

\begin{definition}
Let $x$ be a reduced curve singularity, and let $B$ be a corresponding subalgebra as in Lemma \ref{lem:curve_sings_to_power_series}.
The number $m$ in Lemma \ref{lem:curve_sings_to_power_series} is called the \emphbf{number of branches} of the singularity $x$. The integers $c_1, \ldots, c_m$ such that $\Cond_{A/B} = (t_1^{c_1}, \ldots, t_m^{c_m})$ are called the \emphbf{conductances} or \emphbf{branch conductances} of the singularity $x$; they are only well-defined up to a permutation of the branches. Their sum $c$ we call the \emphbf{total conductance} of the singularity.
\end{definition}

We've seen in Lemma \ref{lem:curve_sings_to_power_series} that only certain subrings of products of power series rings occur as the complete local rings of reduced curve singularities. The next lemma shows that by taking an additional quotient, those subrings with conductances bounded by $\vec{c}$ are in bijection with subalgebras of certain finite-dimensional algebras which we now introduce. 

\begin{definition}
Given a tuple $\vec{c} = (c_1,\ldots, c_m)$ of positive integers, let $A(\vec{c})$ denote the $k$-algebra
\[
  A(\vec{c}) = \prod_{i = 1}^m k[t_i]/(t_i^{c_i})
\]
and let $A^+(\vec{c})$ denote the subalgebra
\[
  A^+(\vec{c}) = \{ (f_1(t_1), \ldots, f_m(t_m)) \in A(\vec{c}) \mid f_i(0) = f_j(0) \text{ for all }i,j \}
\]
where ``constants are equal."
\end{definition}

The algebra $A(\vec{c})$ acts as an ``infinitesimal germ of the normalization" and $A^+(\vec{c})$ acts as an ``infinitesimal germ of the seminormalization" of the curve singularity. The $+$ notation is intended to evoke the appearance of a seminormalization, as the seminormalization of a curve singularity is a union of the coordinate axes in $\AA^n$.

\begin{lemma} \label{lem:power_series_to_finite}
  Fix a tuple $\vec{c} = (c_1, \ldots, c_m)$ of positive integers.

  \begin{enumerate}
  \item[(1)]
  The $k$-subalgebras $B$ of $A = \prod_{i = 1}^m k\llbracket t_i \rrbracket$ such that
  \begin{enumerate}
    \item[(i)] $B$ contains the $A$-ideal $(t_1^{c_1}, \ldots, t_m^{c_m})$ and
    \item[(ii)] $B$ has codimension $\delta$ in $A$ as a $k$-vector subspace
  \end{enumerate}
  are in natural bijection with $k$-subalgebras $R$ of $A(\vec{c})$ of codimension $\delta.$

  \item[(2)]
  The $k$-subalgebras $B$ of $A$ satisfying (i), (ii), and 
  \begin{enumerate}
    \item[(iii)] $B \cap \left(\prod_{i=1}^m k \cdot 1\right) = k \cdot (1, \ldots, 1)$
  \end{enumerate}
  are in natural bijection with $k$-subalgebras $R$ of $A^+(\vec{c})$ with codimension $g \coloneqq \delta - m + 1$.
  \end{enumerate}
\end{lemma}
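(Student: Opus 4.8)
The plan is to route both bijections through the quotient map $\pi \colon A \to A/J$, where $J = (t_1^{c_1}, \ldots, t_m^{c_m})$. Since $k\llbracket t_i \rrbracket/(t_i^{c_i}) \cong k[t_i]/(t_i^{c_i})$ factor by factor, we have $A/J \cong A(\vec c)$, and I will use $\pi$ for the induced surjection $A \to A(\vec c)$. For part (1), I would invoke the lattice correspondence attached to the ideal $J$: because every $B$ under consideration contains $J$, the assignment $B \mapsto R := \pi(B) = B/J$ carries $k$-subalgebras of $A$ containing $J$ to $k$-subalgebras of $A(\vec c)$, with inverse $R \mapsto \pi^{-1}(R)$ (which automatically contains $\ker\pi = J$); these are mutually inverse since $\pi$ is surjective with kernel inside $B$. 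Codimension is preserved on the nose, as $A/B \cong (A/J)/(B/J) = A(\vec c)/R$ as $k$-vector spaces, so $\dim_k(A/B) = \dim_k(A(\vec c)/R)$. This settles (1) with no real content beyond bookkeeping.

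For part (2), I restrict the bijection of (1) to those $B$ satisfying (iii), so the work splits into identifying the image of condition (iii) and tracking the codimension shift. For the codimension, consider the constant-term map $\epsilon \colon A(\vec c) \to k^m$, $(f_i) \mapsto (f_i(0))$, a surjection whose kernel is the nilradical; by definition $A^+(\vec c) = \epsilon^{-1}(\Delta)$, where $\Delta = k\cdot(1,\ldots,1)$ is the diagonal. Hence $A(\vec c)/A^+(\vec c) \cong k^m/\Delta$ has dimension $m-1$, so once we know $R \subseteq A^+(\vec c)$ we get $\delta = \dim_k(A(\vec c)/R) = (m-1) + \dim_k(A^+(\vec c)/R)$, i.e. $R$ has codimension $g = \delta - m + 1$ in $A^+(\vec c)$, as claimed. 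Everything therefore reduces to showing that $B$ satisfies (iii) if and only if $R \subseteq A^+(\vec c)$.

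I expect this last equivalence to be the crux. First I would translate (iii) into a condition on $R$: since $J \subseteq B$ and constants survive $\pi$ (each $c_i \geq 1$), a constant tuple lies in $B$ exactly when its image lies in $R$, so (iii) is equivalent to $R \cap \big(\prod_{i=1}^m k\cdot 1\big) = \Delta$. The implication $R \subseteq A^+(\vec c) \Rightarrow R \cap \big(\prod_{i=1}^m k\cdot 1\big) = \Delta$ is immediate, since $A^+(\vec c) \cap \big(\prod_{i=1}^m k\cdot 1\big) = \Delta$. The reverse implication is the hard part, and it is where Lemma \ref{lem:subalges_of_kn} re-enters. The key observation is that the idempotents of $A(\vec c)$ are exactly the $0/1$ constant tuples $1_S$ (each factor $k[t_i]/(t_i^{c_i})$ is local, hence has only the idempotents $0,1$); thus every idempotent of $R$ lies in $R \cap \big(\prod_{i=1}^m k\cdot 1\big) = \Delta$ and so is $0$ or $1$. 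Being a finite-dimensional, hence Artinian, $k$-algebra with no nontrivial idempotents, $R$ is local. Then for any $r \in R$ the subalgebra $k[r]$ is local (writing $r = \lambda + (r-\lambda)$ with $\lambda \in k$ the image of $r$ in the residue field and $r - \lambda$ nilpotent), so its image $\epsilon(k[r]) = k[\epsilon(r)]$ is a local subalgebra of $k^m$; by Lemma \ref{lem:subalges_of_kn}(ii) this image equals $\Delta$, forcing $\epsilon(r) \in \Delta$. Since $r$ was arbitrary, $R \subseteq \epsilon^{-1}(\Delta) = A^+(\vec c)$. Combining this equivalence with the codimension count restricts the bijection of (1) to the desired bijection between the $B$ satisfying (i)--(iii) and the subalgebras of $A^+(\vec c)$ of codimension $g$.
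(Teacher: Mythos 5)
Your proposal is correct and takes essentially the same route as the paper: both pass through the quotient map $\pi \colon A \to A(\vec{c})$, use the isomorphism-theorem (lattice) correspondence for part (1), and obtain the codimension shift in part (2) from $\dim_k A(\vec{c})/A^+(\vec{c}) = m-1$. The only difference is that the paper dismisses the equivalence of (iii) with $\pi(B) \subseteq A^+(\vec{c})$ as clear, whereas you supply a genuine proof of the nontrivial direction (idempotents of $A(\vec{c})$ are the constant $0/1$ tuples, hence $R$ is local, hence $\epsilon(k[r])$ is a local subalgebra of $k^m$ and Lemma \ref{lem:subalges_of_kn}(ii) applies) --- a correct filling-in of that step.
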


\begin{proof}
Let $\pi : A \to A(\vec{c})$ be the quotient map. By the Fourth Isomorphism Theorem, the $k$-subalgebras $B$ of $\prod_{i = 1}^m k\llbracket t_i \rrbracket$ satisfying (i) are in bijection with the subalgebras $R$ of $A(\vec{c})$ via $B \mapsto \pi(B)$, $R \mapsto \pi^{-1}(R)$. It is clear that a subalgebra $B$ satisfies (iii) in addition to (i) if and only if $\pi(B)$ factors through $A^+(\vec{c})$.

It only remains to verify the claims about codimension. Suppose $B$ is a subalgebra satisfying (i), (ii).
By the Third Isomorphism Theorem, $\prod_{i=1}^m k\llbracket t_i \rrbracket / B$ and $A(\vec{c}) / \pi(B)$ are isomorphic $k$-modules, so $\pi(B)$ has codimension $\delta$ in $A(\vec{c})$. Observe that $A^+(\vec{c})$ has codimension $m - 1$ in $A(\vec{c})$. It follows that if $\pi(B)$ factors through $A^+(\vec{c})$, then $\pi(B)$ has codimension $\delta - (m - 1) = \delta - m + 1$ in $A^+(\vec{c})$.
\end{proof}

Again, important invariants of reduced curve singularities emerge in the lemma. We give them the following names.

\begin{definition}
If $x$ is a reduced curve singularity corresponding to $B$ via Lemma \ref{lem:curve_sings_to_power_series}, the integer $\delta = \dim_k (\prod_{i = 1}^m k\llbracket t_i \rrbracket / B)$ is called the \emphbf{delta invariant} of $x$ and $g = \delta - m + 1$ is called the \emphbf{genus} of $x$.
\end{definition}

\medskip

Altogether, Lemmas \ref{lem:curve_sings_to_power_series} and \ref{lem:power_series_to_finite} imply that reduced algebraic curve singularities of genus $g$ correspond to subalgebras of $A^+(\vec{c})$ of codimension $g$. The subalgebras of $A^+(\vec{c})$ of codimension $g$ are parametrized by moduli schemes called ``territories," originally introduced by Ishii for $k$-algebras in the context of Noetherian $k$-schemes. We start by defining an appropriate
moduli functor.

\begin{definition} (See \cite[Definition 1]{ishii_moduli_subrings}, \cite[Section 2]{bozlee_guevara_smyth})
Let $A$ be an $n$-dimensional $k$-algebra. Given an $k$-scheme $S$, a \emphbf{family of subalgebras of $A$ of corank $\delta$} on $S$ is a quasi-coherent $\OO_S$-subalgebra $\mathscr{B}$ of $f^*A$ such that the quotient $\OO_S$-module $f^*A / \mathscr{B}$ is locally free of rank $\delta$.

The \emph{$\delta$-territory of $A$} is the functor $\Ter^\delta_{A} : (\Sch/k)^{op} \to \Set$ by:
\begin{enumerate}
    \item If $S$ is a $k$-scheme, $\Ter^\delta_A(S)$ is the set of families of subalgebras of $A$ of corank $\delta$ on $S$.
    \item If $f : S \to S'$ is a morphism of $k$-schemes, then $\Ter^\delta_{A}(g) : \Ter^\delta_{A}(S') \to \Ter^\delta_{A}(S)$ is defined by taking a family of subalgebras to its pullback.
\end{enumerate}
\end{definition}

\begin{lemma} \label{lem:ter_representability} (See \cite[Theorem 1]{ishii_moduli_subrings}, \cite[Theorem 2.5]{bozlee_guevara_smyth})
Let $A$ be an $n$-dimensional $k$-algebra. The functor $\Ter^\delta_{A}$ is represented by a closed subscheme of $\Grass(n-\delta, A)$, which by common abuse of notation we also denote by $\Ter^\delta_A$. In particular, $\Ter^\delta_A$ is a projective $k$-scheme.
\end{lemma}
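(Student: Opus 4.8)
The plan is to realize $\Ter^\delta_A$ as a subfunctor of the Grassmannian functor and then cut it out by closed conditions encoding the two algebra axioms. Write $G = \Grass(n-\delta, A)$, and recall that $G$ represents the functor sending a $k$-scheme $S$ (with structure map $f : S \to \Spec k$) to the set of $\OO_S$-subbundles $\mathscr{B} \subseteq f^*A$ of rank $n-\delta$; equivalently, those $\OO_S$-submodules for which the quotient $f^*A/\mathscr{B}$ is locally free of rank $\delta$. Forgetting the algebra structure gives a natural transformation $\Ter^\delta_A \to G$, and I would first observe that this is a monomorphism of functors: being a subalgebra is a \emph{property} of the subbundle $\mathscr{B}$, not extra data, so an $S$-point of $\Ter^\delta_A$ is determined by its underlying $S$-point of $G$. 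It therefore suffices to identify the image subfunctor and show it is represented by a closed subscheme of $G$.

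Next I would express the subalgebra conditions on the tautological data of $G$. A subbundle $\mathscr{B} \subseteq f^*A$ is a subalgebra precisely when (a) it contains the unit section $1\otimes 1$, and (b) it is closed under multiplication, i.e.\ the restriction of $\mu : f^*A \otimes_{\OO_S} f^*A \to f^*A$ to $\mathscr{B} \otimes_{\OO_S} \mathscr{B}$ lands in $\mathscr{B}$. Let $\mathscr{U} \subseteq \OO_G \otimes_k A$ be the tautological subbundle and $\mathscr{Q} = (\OO_G \otimes_k A)/\mathscr{U}$ the tautological quotient, locally free of rank $\delta$. Condition (a) becomes the vanishing of the global section $s \in \Gamma(G, \mathscr{Q})$ obtained as the image of $1 \otimes 1$ under $\OO_G \otimes_k A \twoheadrightarrow \mathscr{Q}$, while condition (b) becomes the vanishing of the $\OO_G$-linear map
\[
  \Phi : \mathscr{U} \otimes_{\OO_G} \mathscr{U} \hookrightarrow (\OO_G \otimes_k A)^{\otimes 2} \xrightarrow{\mu} \OO_G \otimes_k A \twoheadrightarrow \mathscr{Q},
\]
where the tensor square is taken over $\OO_G$. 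Equivalently, $\Phi$ is a global section of the locally free sheaf $\HHom_{\OO_G}(\mathscr{U} \otimes_{\OO_G} \mathscr{U}, \mathscr{Q})$.

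I would then invoke the standard principle that the vanishing locus of a global section of a locally free sheaf (equivalently, of a morphism of vector bundles) is a closed subscheme that represents the subfunctor ``points where the section pulls back to zero.'' Applying this to $s$ and to $\Phi$ produces closed subschemes $Z_{\mathrm{unit}}, Z_{\mathrm{mult}} \subseteq G$; put $Z = Z_{\mathrm{unit}} \cap Z_{\mathrm{mult}}$. The concluding step is to check that $Z$ represents $\Ter^\delta_A$: for any $g : S \to G$ classifying $\mathscr{B} = g^*\mathscr{U}$, the pullbacks $g^*s$ and $g^*\Phi$ are exactly the unit- and multiplication-obstructions for $\mathscr{B}$, so $g$ factors through $Z$ if and only if $\mathscr{B}$ is a subalgebra, i.e.\ $\mathscr{B} \in \Ter^\delta_A(S)$. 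Since a closed subscheme of the projective $k$-scheme $G$ is again projective over $k$, the final assertion follows at once.

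I expect the only genuine subtlety to lie in that last compatibility: one must verify that forming $\Phi$ and $s$ commutes with pullback along $g$, so that $g^*\Phi$ and $g^*s$ really compute the intrinsic subalgebra obstructions of $\mathscr{B}$ over $S$. This is precisely where the local freeness of $\mathscr{U}$ and $\mathscr{Q}$ is used: it guarantees that the short exact sequence $0 \to \mathscr{U} \to \OO_G \otimes_k A \to \mathscr{Q} \to 0$ remains exact after pullback (being locally split) and that $g^*(\mathscr{U} \otimes_{\OO_G} \mathscr{U}) = \mathscr{B} \otimes_{\OO_S} \mathscr{B}$, so that $g^*\Phi$ is the composite $\mathscr{B} \otimes_{\OO_S} \mathscr{B} \to f^*A \to f^*A/\mathscr{B}$. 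Everything else — that $G$ is projective over $k$ and that zero loci of vector bundle maps are closed — is standard, so the argument reduces to packaging the two algebra axioms as vanishing conditions on the tautological data and confirming their stability under base change.
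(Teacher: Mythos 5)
Your proof is correct, and it takes essentially the same approach as the argument behind this lemma (the paper itself gives no proof, deferring to Ishii and to Bozlee--Guevara--Smyth, where the territory is likewise cut out of $\Grass(n-\delta, A)$ as the closed locus where the unit section lands in the tautological subbundle and where multiplication composed with projection to the tautological quotient vanishes). The base-change compatibility you flag is indeed the only delicate point, and your appeal to the local splitness of the tautological sequence handles it correctly.
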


\begin{remark}
These notions are extended to finite locally free algebras over an arbitrary base in \cite{bozlee_guevara_smyth}.
\end{remark}

We will take $A = A(\vec{c})$ or $A = A^+(\vec{c})$ for the remainder of the paper. Lemmas \ref{lem:curve_sings_to_power_series} and \ref{lem:power_series_to_finite} imply that each isomorphism class of reduced curve singularity over $k$ is represented by a $k$-point of $\Ter^g_{A^+(\vec{c})}$, where $g$ is the genus of the singularity and $\vec{c} = (c_1, \ldots, c_m)$ is at least its vector of branch conductances. Because $A^+(\vec{c})$ is a codimension $m - 1$ subring of $A(\vec{c})$, such a curve singularity is also represented by a $k$-point of $\Ter^\delta_{A(\vec{c})}$, where $\delta$ is the delta invariant of the singularity. Accordingly, we will consistently use the letter $g$ with the territory of $A^+(\vec{c})$ and $\delta$ with the territory of $A(\vec{c})$.

\bigskip

We now consider a global analogue of a territory. Intuitively, it parametrizes ways of a changing a scheme $\tilde{X}$ to a scheme $X$ by crimping jets and gluing points within a finite closed subscheme $Z \subseteq \tilde{X}$. We will apply this in the case that $\tilde{X}$
is a family of smooth, possibly disconnected curves; then $X$ will vary over the curves with normalization $\tilde{X}$ such that the $V(\Cond_{\tilde{X}/X})$ is contained in $Z$.

\begin{definition} \label{def:global_territory}
Suppose that $\tilde{\pi} : \tilde{X} \to \Spec k$ is a projective scheme. Let $\iota : Z \to \tilde{X}$ be a closed subscheme finite over $\Spec k$. Write $\mathscr{I}_{Z/\tilde{X}}$ for the sheaf of ideals of $Z$ in $\tilde{X}$. Define a functor $\Ter^\delta_{Z/\tilde{X}}$ that assigns to a $k$-scheme $S$ the set of morphisms of $S$-schemes (modulo isomorphism)
\[
  \begin{tikzcd}
    \tilde{X}_S \ar[rr, "\nu"] \ar[dr, "\tilde{\pi}_S"'] &  & X \ar[dl, "\pi"] \\
     & S
  \end{tikzcd}
\]
such that
\begin{enumerate}
  \item $\nu$ is a finite morphism of schemes,
  \item the associated map of sheaves $\nu^\sharp : \OO_{X} \to \nu_*\OO_{\tilde{X}_S}$ is injective,
  \item $\nu^\sharp(\OO_{X})$ contains $\nu_*\mathscr{I}_{Z_S/X_S}$,
  \item letting $\Delta = (\nu_*\OO_{\tilde{X}_S}) / \nu^\sharp(\OO_{X})$, the $\OO_S$-module $\pi_*\Delta$
  is locally free of rank $\delta$.
\end{enumerate}
Restriction morphisms $\Ter^\delta_{Z/\tilde{X}}(S') \to \Ter^\delta_{Z/\tilde{X}}(S)$ are given by pullback.
\end{definition}

If $\tilde{X}$ is a smooth, possibly disconnected curve, and $X$ is a family of connected curves, then $\nu : \tilde{X}_S \to X$ is a family of ``equinormalized curves" over $S$. Moduli of equinormalized curves are studied in more detail in \cite{bozlee_guevara_smyth}.

\medskip

While we do not put explicit conditions on $X \to S$ in Definition \ref{def:global_territory}, it possesses several good properties, which we state in the following proposition.

\begin{proposition} \label{prop:X_to_S_is_nice} (See \cite[Section 4]{bozlee_guevara_smyth})
Let $\nu : \tilde{X}_S \to X$ be an $S$-morphism as in Definition \ref{def:global_territory}. Then:
\begin{enumerate}
  \item $\nu^{-1}(\nu(Z_S)) = Z_S$;
  \item $\nu$ restricts to an isomorphism $\tilde{X}_S - Z_S \to X - \nu(Z_S)$;
  \item the map $\pi : X \to S$ is proper, flat, and of finite presentation.
\end{enumerate}
\end{proposition}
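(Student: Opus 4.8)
The plan is to read all three properties off the conductor of $\tilde{X}_S$ over $X$. Write $\Delta = (\nu_*\OO_{\tilde{X}_S})/\nu^\sharp(\OO_X)$ as in Definition \ref{def:global_territory}, and let $\mathscr{C} = \AAnn_{\OO_X}(\Delta)$ be the conductor, regarded simultaneously as an ideal sheaf of $\OO_X$ and, since it annihilates the cokernel and is thus stable under multiplication by $\nu_*\OO_{\tilde{X}_S}$, as an ideal sheaf of $\nu_*\OO_{\tilde{X}_S}$. Condition (iii) says the ideal $\mathscr{I}_{Z_S/\tilde{X}_S}$, viewed inside $\nu_*\OO_{\tilde{X}_S}$, already lies in $\nu^\sharp(\OO_X)$; being an ideal of $\nu_*\OO_{\tilde{X}_S}$ contained in $\OO_X$, it is a common ideal, whence $\mathscr{C} \supseteq \mathscr{I}_{Z_S/\tilde{X}_S}$ and the closed subscheme it cuts out is contained in $Z_S$. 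Because $\mathscr{C}$ is a common ideal, it defines compatible closed subschemes $\tilde{Y} \subseteq \tilde{X}_S$ and $Y \subseteq X$ with $\nu^{-1}(Y) = \tilde{Y}$, and on the complements $\Delta$ vanishes, so $\nu$ restricts to an isomorphism $\tilde{X}_S \setminus \tilde{Y} \xrightarrow{\sim} X \setminus Y$. Finally, $\nu$ is finite with injective $\nu^\sharp$, hence closed and scheme-theoretically dominant, hence surjective. This is the engine for everything.

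Granting this, I would prove (i) by a short point-chase. Since $\tilde{Y} \subseteq Z_S$, the open set $\tilde{X}_S \setminus Z_S$ sits inside $\tilde{X}_S \setminus \tilde{Y}$, where $\nu$ is injective, and $\nu$ carries $\tilde{X}_S \setminus \tilde{Y}$ into $X \setminus Y$ and $\tilde{Y}$ into $Y$. Take $p \notin Z_S$; then $p \notin \tilde{Y}$, so $\nu(p) \in X \setminus Y$. If $\nu(p) = \nu(q)$ with $q \in Z_S$, then either $q \in \tilde{Y}$, forcing $\nu(q) \in Y$, a contradiction, or $q \in Z_S \setminus \tilde{Y}$, where injectivity of $\nu$ forces $p = q \in Z_S$, again a contradiction. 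Hence $\nu(p) \notin \nu(Z_S)$, proving $\nu^{-1}(\nu(Z_S)) \subseteq Z_S$; the reverse inclusion is trivial. Property (ii) is then immediate: $\nu$ is an isomorphism on $\tilde{X}_S \setminus \tilde{Y} \supseteq \tilde{X}_S \setminus Z_S$, so it restricts to an isomorphism onto its open image, and (i) together with surjectivity of $\nu$ identifies that image with $X \setminus \nu(Z_S)$.

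For (iii) I would treat flatness, properness, and finite presentation separately. Flatness: in the short exact sequence
\[
  0 \to \OO_X \to \nu_*\OO_{\tilde{X}_S} \to \Delta \to 0
\]
of $\OO_S$-modules, the middle term is $S$-flat because $\tilde{X} \to \Spec k$ is flat and $\nu$ is affine, while $\Delta$ is $S$-flat because it is finite over $S$ (supported on $\nu(Z_S)$) and condition (iv) makes $\pi_*\Delta$ locally free. The long exact $\Tor$ sequence then squeezes $\Tor_1^{\OO_S}(\OO_X,-)$ between $\Tor_2^{\OO_S}(\Delta,-) = 0$ and $\Tor_1^{\OO_S}(\nu_*\OO_{\tilde{X}_S},-) = 0$, so $\OO_X$ is $S$-flat. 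Properness splits in two. Universal closedness of $\pi$ follows from that of $\tilde{\pi}_S = \pi \circ \nu$, a base change of the projective $\tilde{\pi}$, together with surjectivity of $\nu$: after any $T \to S$, every closed $W \subseteq X_T$ satisfies $\pi_T(W) = (\tilde{\pi}_S)_T(\nu_T^{-1}(W))$, which is closed. Separatedness follows because $\nu$ is finite, hence separated, so the relation $\tilde{X}_S \times_X \tilde{X}_S$ is closed in $\tilde{X}_S \times_S \tilde{X}_S$; its image under the proper surjective map $\nu \times_S \nu$ is the image of the diagonal of $X$ over $S$, which is therefore a closed subset, and being always an immersion it is a closed immersion, so $X \to S$ is separated.

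The step I expect to be the real obstacle is finite presentation, since $X$ is handed to us only abstractly through $\nu$ and its properties must be extracted from conditions (i)--(iv) alone. Quasi-compactness and quasi-separatedness are cheap, the former from surjectivity of $\nu$ with $\tilde{X}_S$ quasi-compact over $S$, the latter from the separatedness just shown. The content is that $\OO_X$ is a finitely presented $\OO_S$-algebra: one controls it as a subalgebra of the finitely presented algebra $\nu_*\OO_{\tilde{X}_S}$ with cokernel $\Delta$ finite over $S$, using the bound on the colength to produce finitely many generators and relations. This is precisely the local structure theory of equinormalized families developed in \cite{bozlee_guevara_smyth}, which I would invoke to conclude.
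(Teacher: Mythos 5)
The paper itself contains no proof of this proposition: it is stated with the citation to \cite[Section 4]{bozlee_guevara_smyth} and everything is deferred there. So your attempt can only be compared against the route that citation points to, and on that score you have reconstructed exactly the intended mechanism: the conductor $\mathscr{C} = \AAnn_{\OO_X}(\Delta)$ is the largest common ideal of $\OO_X$ and $\nu_*\OO_{\tilde{X}_S}$, condition (iii) of Definition \ref{def:global_territory} forces $\nu_*\mathscr{I}_{Z_S/\tilde{X}_S} \subseteq \mathscr{C}$, and isomorphy of $\nu$ off $V(\mathscr{C})$ yields (i) and (ii); the $\Tor$ sandwich in $0 \to \OO_X \to \nu_*\OO_{\tilde{X}_S} \to \Delta \to 0$, with $\Delta$ flat because its pushforward is locally free and its support is affine over $S$, yields flatness; and surjectivity of the finite map $\nu$ (closed plus scheme-theoretically dominant) yields universal closedness. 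Your deferral of finite presentation to the colength-bounded generation argument of \cite{bozlee_guevara_smyth} is not a gap relative to the paper, which defers the entire proposition to the same place; it is, however, the one piece of (iii) your write-up does not actually establish.

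One step needs repair as stated: you justify closedness of $\tilde{X}_S \times_X \tilde{X}_S$ inside $\tilde{X}_S \times_S \tilde{X}_S$ by saying ``$\nu$ is finite, hence separated.'' That map is a base change of the diagonal $\Delta_{X/S}$, so appealing to its closedness without further argument is circular --- closedness of (base changes of) $\Delta_{X/S}$ is precisely the separatedness you are trying to prove. The correct justification: the first projection $\tilde{X}_S \times_X \tilde{X}_S \to \tilde{X}_S$ is finite (a base change of $\nu$), hence universally closed, while $\tilde{X}_S \times_S \tilde{X}_S \to \tilde{X}_S$ is separated (a base change of the projective $\tilde{\pi}_S$); by the cancellation property the map $\tilde{X}_S \times_X \tilde{X}_S \to \tilde{X}_S \times_S \tilde{X}_S$, which is always an immersion, is universally closed, hence a closed immersion. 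With that fixed, your final move --- pushing the closed relation forward under the finite map $\nu \times_S \nu$ to conclude that the image of $\Delta_{X/S}$ is closed, and that an immersion with closed image is a closed immersion --- goes through.
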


\medskip

Although the data might seem more weighty, the diagrams in Definition \ref{def:global_territory} are also parametrized by the territory of a finite-dimensional algebra, namely $\tilde{\pi}_*\OO_Z$.
Let us sketch how, assuming $\nu^\sharp$ is an inclusion to declutter the notation. Let $\nu(Z_S)$ denote the scheme theoretic image of $Z_S$.
We construct a diagram:
\[
\begin{tikzcd}
     & 0 \ar[d] & 0 \ar[d] & \\
     0 \ar[r] & \nu_*\mathscr{I}_{Z_S/\tilde{X}_S} \ar[r, equals] \ar[d] & \nu_*\mathscr{I}_{Z_S/\tilde{X}_S} \ar[r] \ar[d] & 0 \ar[d] \\
     0 \ar[r] & \OO_X \ar[r] \ar[d] & \nu_*\OO_{\tilde{X}_S} \ar[r] \ar[d] & \Delta \ar[r] \ar[d, "\sim"] & 0 \\
     0 \ar[r] & \OO_{\nu(Z_S)} \ar[r] \ar[d] & \nu_*\OO_{Z_S} \ar[r] \ar[d] & \ol{\Delta} \ar[r] \ar[d] & 0 \\
     & 0 & 0 & 0
\end{tikzcd}
\]
The middle row is exact by definition. The middle column is exact since $\nu$ is finite. The first column is exact by construction of the scheme theoretic image and the fact that $\nu_*\mathscr{I}_{Z_S/\tilde{X}_S}$ factors through $\OO_X$. We construct the arrow $\OO_{\nu(Z_S)} \to \nu_*\OO_{Z_S}$ by the universal property of quotients and set $\ol{\Delta}$ to the cokernel. Again by the universal property of quotients, there is a map $\Delta \to \ol{\Delta}$ completing the diagram. The last column is exact by the snake lemma. Observe that $\nu(Z_S) \to S$ is finite, so $R^1\pi_*\OO_{\nu(Z_S)} = 0$. Therefore, applying $\pi_*$, we get a subalgebra $\pi_*\OO_{\nu(Z_S)}$ of
$\pi_*\nu_*\OO_{Z_S} \cong \tilde{\pi}_*\OO_Z \otimes \OO_S$, the quotient by which is $\pi_*\ol{\Delta} \cong \pi_*\Delta$, a locally free $\OO_S$-module of rank $\delta.$ That is, starting with $\tilde{X}_S \to X$ in $\Ter^\delta_{Z/\tilde{X}}(S)$, we get a point $\pi_*\OO_{\nu(Z_S)}$ of $\Ter^\delta_{\tilde{\pi}_*(\OO_Z)}(S)$. The following lemma asserts that this yields an isomorphism of functors.

\begin{lemma} \cite[Theorem 4.5]{bozlee_guevara_smyth} \label{lem:global_territory_to_local_territory}
With notation as in Definition \ref{def:global_territory}, there is an isomorphism of functors $\Ter^\delta_{Z/\tilde{X}} \to \Ter^{\delta}_{\tilde{\pi}_*\OO_Z}$, given by sending a morphism of $S$-schemes $\tilde{X}_S \to X \in \Ter^{\delta}_{Z/\tilde{X}}(S)$ as in Definition \ref{def:global_territory} to $\pi_*(\OO_{\nu(Z_S)})$. In particular, $\Ter^\delta_{Z/\tilde{X}}$ is representable by a projective scheme.
\end{lemma}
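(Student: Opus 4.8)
The forward direction of the correspondence is already built in the discussion preceding the statement: from an object $\tilde{X}_S \to X$ of $\Ter^\delta_{Z/\tilde{X}}(S)$ one extracts the subalgebra $\pi_*\OO_{\nu(Z_S)} \subseteq \pi_*\nu_*\OO_{Z_S} \cong \tilde{\pi}_*\OO_Z \otimes \OO_S$, whose quotient $\pi_*\ol{\Delta} \cong \pi_*\Delta$ is locally free of rank $\delta$. The plan is therefore to (a) record that this assignment is natural in $S$, (b) produce an inverse by a pinching (pushout) construction, (c) check the two assignments are mutually inverse, and (d) verify compatibility with base change, so that what we obtain is an isomorphism of functors and not merely a bijection on $S$-points.

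To construct the inverse I would start with $R \in \Ter^\delta_{\tilde{\pi}_*\OO_Z}(S)$, that is, a quasi-coherent $\OO_S$-subalgebra $R \subseteq \tilde{\pi}_*\OO_Z \otimes \OO_S$ with locally free quotient of rank $\delta$. Since $S \to \Spec k$ is flat, flat base change identifies $\tilde{\pi}_*\OO_Z \otimes \OO_S \cong (\tilde{\pi}_S)_*\OO_{Z_S}$, so $R$ is the same datum as a subalgebra of $(\tilde{\pi}_S)_*\OO_{Z_S}$. As $\tilde{\pi}_*\OO_Z$ is finite-dimensional over $k$, the ambient module is finite locally free over $\OO_S$, and the locally free quotient forces $R$ to be finite locally free over $\OO_S$ as well; hence $W \coloneqq \SSpec{S}{R}$ is finite over $S$, and the inclusion $R \hookrightarrow (\tilde{\pi}_S)_*\OO_{Z_S}$ corresponds to a finite $S$-morphism $Z_S \to W$. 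I then define $X$ to be the pushout of the closed immersion $Z_S \hookrightarrow \tilde{X}_S$ along $Z_S \to W$, i.e. the pinching $X = \tilde{X}_S \sqcup_{Z_S} W$. The existence of this pushout as a scheme, together with a finite morphism $\nu : \tilde{X}_S \to X$, a closed immersion $W \hookrightarrow X$ identifying $W$ with the scheme-theoretic image $\nu(Z_S)$, and the identification $\OO_X \cong \nu_*\OO_{\tilde{X}_S} \times_{\nu_*\OO_{Z_S}} \OO_W$, is exactly Ferrand's theorem on amalgamated sums along a closed immersion and a finite morphism.

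With $\nu$ in hand I would verify conditions (1)--(4) of Definition \ref{def:global_territory} for $\tilde{X}_S \to X$: finiteness of $\nu$ is built into the pinching; injectivity of $\nu^\sharp$ holds because $\OO_X$ is realized as a subsheaf of $\nu_*\OO_{\tilde{X}_S}$; condition (3) is automatic, since a section in $\nu_*\mathscr{I}_{Z_S/\tilde{X}_S}$ maps to $0 \in R$ under $\nu_*\OO_{\tilde{X}_S} \to \nu_*\OO_{Z_S}$ and hence lies in the fiber product; and for (4), the fiber-product description gives $\Delta = \nu_*\OO_{\tilde{X}_S}/\OO_X \cong \nu_*\OO_{Z_S}/\OO_W$, whence $\pi_*\Delta \cong (\tilde{\pi}_S)_*\OO_{Z_S}/R$ (using $R^1\pi_*\OO_{\nu(Z_S)} = 0$ as in the sketch) is locally free of rank $\delta$. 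To see the two constructions are mutually inverse, note that $\nu(Z_S) = W$ by construction and $\pi_*\OO_W = R$, so starting from $R$ returns $R$; conversely, starting from $\tilde{X}_S \to X$, Proposition \ref{prop:X_to_S_is_nice} shows $\nu$ is an isomorphism away from $Z_S$ and $\nu^{-1}(\nu(Z_S)) = Z_S$, while the diagram in the discussion above exhibits $\OO_X$ as $\nu_*\OO_{\tilde{X}_S} \times_{\nu_*\OO_{Z_S}} \OO_{\nu(Z_S)}$. Since $\nu(Z_S) \to S$ is finite we have $\nu(Z_S) = \SSpec{S}{\pi_*\OO_{\nu(Z_S)}}$, and the uniqueness of the pushout identifies the reconstructed scheme with $X$.

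Finally I would check naturality: for $f : S' \to S$ both functors restrict by pullback, so it suffices that forming $W$, the pinching, and the pushforward $\pi_*\OO_{\nu(Z_S)}$ all commute with $f^*$. Because $Z$ is finite over the field $k$ and $W \to S$, $\nu(Z_S) \to S$ are finite (hence affine), the relevant pushforwards commute with arbitrary base change and the locally free rank-$\delta$ quotient is preserved. I expect the main obstacle to be precisely this pinching step: one must establish that the amalgamated sum $\tilde{X}_S \sqcup_{Z_S} W$ exists in schemes with $\nu$ finite and with the fiber-product structure sheaf, and---crucially for an isomorphism of functors rather than a pointwise bijection---that this construction is stable under arbitrary base change in $S$. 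Verifying that the constructed $X \to S$ is a legitimate object of the functor (in particular that flatness, guaranteed a posteriori by Proposition \ref{prop:X_to_S_is_nice}, is compatible with the locally free quotient hypothesis) is the remaining technical point; everything else is formal once the pinching and its base-change behavior are in place.
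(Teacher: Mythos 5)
The paper does not prove this lemma at all --- it is quoted from \cite[Theorem 4.5]{bozlee_guevara_smyth}, and the only argument in the text is the diagram chase preceding the statement, which constructs the forward map $(\tilde{X}_S \to X) \mapsto \pi_*\OO_{\nu(Z_S)}$. So the comparison can only be against that sketch and the cited reference: your first paragraph reproduces the paper's sketch exactly, and your inverse-via-pinching construction is the natural way (and, in substance, the cited paper's way) to complete it. Your outline is sound: realizing $W$ as the relative spectrum of $R$, forming the Ferrand pushout $X = \tilde{X}_S \sqcup_{Z_S} W$, reading off conditions (i)--(iv) of Definition \ref{def:global_territory} from the fiber-product description of $\OO_X$, and using the cartesian square $\OO_X \cong \nu_*\OO_{\tilde{X}_S} \times_{\nu_*\OO_{Z_S}} \OO_{\nu(Z_S)}$ (which the paper's diagram exhibits) for the mutual-inverse check are all correct steps.

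Two details you flag but leave open should be discharged. First, Ferrand's pinching theorem is not unconditional: the pushout of a closed immersion along a finite morphism exists as a scheme under an affineness hypothesis (e.g.\ every finite set of points admits an affine open neighborhood). This holds here after the harmless reduction to affine $S$ --- the construction is Zariski-local on $S$ and glues --- since then $\tilde{X}_S$ is projective over an affine scheme. Second, base-change stability, which you correctly single out as the crux, has a short proof that you stop just short of giving: the defining exact sequence $0 \to \OO_X \to \nu_*\OO_{\tilde{X}_S} \oplus \OO_W \to \nu_*\OO_{Z_S} \to 0$ (pushforwards to $X$ suppressed) remains exact after any base change $S' \to S$ because $\nu_*\OO_{Z_S}$ is $S$-flat, $Z_S$ being pulled back from the field $k$ (equivalently, because $\Delta$ is $S$-flat by condition (iv)); combined with the fact that pushforward along the affine morphisms from $Z_S$, $W$, and $\nu(Z_S)$ to $S$ commutes with arbitrary base change, this shows the pinching commutes with pullback, and naturality of both assignments follows. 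With these two points filled in, your argument is a complete proof of the statement the paper imports.
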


In the case that $\tilde{X}$ is a smooth, possibly disconnected curve over $k$ and $Z$ is a closed subscheme consisting of points $q_1, \ldots, q_m$ with respective multiplicities $c_1, \ldots, c_m$, then
\[
  \Ter^\delta_{Z/\tilde{X}} \cong \Ter^\delta_{A(\vec{c})}.
\]

\medskip

Crucially for our main result, Ishii has shown that when $A$ is a local $k$-algebra, any two $k$-points of the territory $\Ter^\delta_{A}$ are connected by a chain of $\AA^1$s \cite[Corollary 2]{ishii_moduli_subrings}. We immediately obtain the following lemma.

\begin{lemma} \label{lem:territory_connected} 
For any $g$ and $\vec{c}$, $\Ter^g_{A^+(\vec{c})}$ is connected.
\end{lemma}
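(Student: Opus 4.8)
The plan is to deduce the lemma directly from Ishii's theorem once we observe that $A^+(\vec{c})$ is a \emph{local} $k$-algebra. The crucial point to check is therefore locality. An element $(f_1(t_1), \ldots, f_m(t_m)) \in A^+(\vec{c})$ has all of its constant terms equal to a common value $a \in k$. If $a \neq 0$, then each $f_i$ is a unit in $k[t_i]/(t_i^{c_i})$, and the inverse $(f_1^{-1}, \ldots, f_m^{-1})$ again has all constant terms equal, namely to $a^{-1}$; hence it lies in $A^+(\vec{c})$, so the element is a unit. Conversely, if $a = 0$ the element lies in the proper ideal of tuples with vanishing constant term. Thus the non-units are exactly $\mm = \{ (f_i)_i \in A^+(\vec{c}) : f_i(0) = 0 \}$, this is the unique maximal ideal, and $A^+(\vec{c})/\mm \cong k$. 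So $A^+(\vec{c})$ is local.

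With locality in hand, I would apply Ishii's \cite[Corollary 2]{ishii_moduli_subrings} with $A = A^+(\vec{c})$ and $\delta = g$: any two $k$-points of $\Ter^g_{A^+(\vec{c})}$ are joined by a chain of $\AA^1$'s lying inside the territory.

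Finally, I would promote this statement about points to connectedness of the whole scheme. By Lemma \ref{lem:ter_representability}, $\Ter^g_{A^+(\vec{c})}$ is a projective, hence finite-type, scheme over the algebraically closed field $k$, so its $k$-points are precisely its closed points and these are dense. If the territory admitted a decomposition into two disjoint nonempty open-and-closed subschemes, then each would contain a closed point; but any chain of $\AA^1$'s connecting a pair of such points is connected (each $\AA^1$ is irreducible, and consecutive members share a point) and so must lie entirely in one piece, contradicting that it joins the two points. Therefore no such decomposition exists and $\Ter^g_{A^+(\vec{c})}$ is connected.

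The only step with genuine content is the verification that $A^+(\vec{c})$ is local, and even that reduces to the short unit computation above; the remainder is a formal application of the cited result together with the standard fact that a finite-type $k$-scheme whose closed points are pairwise connected by chains of rational curves is connected. This is why the surrounding text can assert that the lemma follows ``immediately.''
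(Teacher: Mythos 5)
Your proposal is correct and follows exactly the paper's route: the paper also deduces the lemma ``immediately'' from Ishii's \cite[Corollary 2]{ishii_moduli_subrings}, with the locality of $A^+(\vec{c})$ as the (implicit) key observation. Your write-up simply makes explicit the unit computation showing $A^+(\vec{c})$ is local and the standard passage from chains of $\AA^1$'s between closed points to connectedness of the projective scheme, both of which the paper leaves to the reader.
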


By contrast, $\Ter^\delta_{A(\vec{c})}$ is rarely connected. It has connected components corresponding to different choices of which points of $\Spec A(\vec{c})$ are glued together, and with what genus they are glued together. In order to make this precise, we need notation for the points of $\Spec A(\vec{c})$ and the restriction of $\vec{c}$ to a subset of $\{ 1, \ldots, m \}$.

\begin{definition}
For each $i = 1, \ldots, m$, write $\mathfrak{q}_i$ for the point $V((1, \ldots, t_i, \ldots, 1))$ of $\Spec A(\vec{c})$.
\end{definition}

\begin{definition}
If $\vec{c} = (c_1, \ldots, c_m)$ is a tuple of positive integers and $P = \{i_1, \ldots, i_p\}$ is a subset of $\{ 1, \ldots, m \}$, with $i_1 < \cdots < i_p$, denote by $\vec{c}|_P$ the tuple $(c_{i_1}, \ldots, c_{i_p})$.
\end{definition}

\begin{lemma} \label{lem:subalges_of_kn_maps}
Resume the notation of Lemma \ref{lem:subalges_of_kn}.
\begin{enumerate}
  \item If $\mathcal{P}$ is a partition of $\{1, \ldots, m \},$ the parts of $\mathcal{P}$ are in bijection with the points of $\Spec B_{\mathcal{P}}$ via
  \[
     P \mapsto \mathfrak{q}_P.
  \]
  where $\mathfrak{q}_P$ is the prime ideal $1_{P^c} \cdot B_{\mathcal{P}}$ of $B_{\mathcal{P}}$.
  \item For each partition $\mathcal{P}$ of $\{1, \ldots, m\}$, the map
  \[
    \Spec k^m = \{ \mathfrak{q}_{\{1\}}, \ldots, \mathfrak{q}_{\{m\}} \} \to \Spec B_{\mathcal{P}} = \{ \mathfrak{q}_P \mid P \in \mathcal{P} \}
  \]
  induced by the inclusion $B_{\mathcal{P}} \to k^m$ sends $\mathfrak{q}_{\{i\}}$ to $\mathfrak{q}_P$ where $P$ is the part of the partition containing $i$.
\end{enumerate}
\end{lemma}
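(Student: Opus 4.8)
The plan is to first observe that $B_{\mathcal{P}}$ is nothing but a product of copies of $k$ indexed by the parts of $\mathcal{P}$, and then to read off both statements from this description together with the standard behavior of $\Spec$ on products of fields and on contractions. Since the parts $P \in \mathcal{P}$ are disjoint and cover $\{1, \ldots, m\}$, the elements $1_P$ form a complete system of orthogonal idempotents: $1_P \cdot 1_Q = 1_{P \cap Q} = 0$ for $P \neq Q$, and $\sum_{P \in \mathcal{P}} 1_P = 1_{\{1,\ldots,m\}} = (1,\ldots,1)$. Consequently the spanning set $\{1_P \mid P \in \mathcal{P}\}$ is a $k$-basis consisting of orthogonal idempotents, and $B_{\mathcal{P}} = \prod_{P \in \mathcal{P}} k \cdot 1_P \cong k^{|\mathcal{P}|}$.

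For part (i), I would use that $\Spec$ of a finite product of fields consists of exactly one point per factor, namely the kernel of the corresponding projection. The projection of $B_{\mathcal{P}}$ onto its $P$-factor sends $b = \sum_{Q} x_Q 1_Q$ to $x_P$; its kernel is $\Span \{1_Q \mid Q \neq P\}$. It then remains to check that this kernel equals the proposed ideal $\mathfrak{q}_P = 1_{P^c} \cdot B_{\mathcal{P}}$. This is the one small computation: since $P^c = \bigcup_{Q \neq P} Q$, we have $1_{P^c} \cdot 1_Q = 1_Q$ when $Q \neq P$ and $1_{P^c} \cdot 1_P = 0$, so $1_{P^c} \cdot B_{\mathcal{P}} = \Span \{1_Q \mid Q \neq P\}$, as needed. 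This exhibits $P \mapsto \mathfrak{q}_P$ as a bijection from $\mathcal{P}$ onto the points of $\Spec B_{\mathcal{P}}$.

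For part (ii), the induced map $\Spec k^m \to \Spec B_{\mathcal{P}}$ sends a prime to its contraction along the inclusion $B_{\mathcal{P}} \hookrightarrow k^m$, so I would simply compute $\mathfrak{q}_{\{i\}} \cap B_{\mathcal{P}}$. The point $\mathfrak{q}_{\{i\}}$ is the kernel of the $i$th coordinate projection $k^m \to k$, and for $b = \sum_Q x_Q 1_Q \in B_{\mathcal{P}}$ the $i$th coordinate of $b$ is precisely $x_P$, where $P$ is the part containing $i$ (because only $1_P$ is nonzero in the $i$th slot). Hence $b \in \mathfrak{q}_{\{i\}}$ if and only if $x_P = 0$, if and only if $b \in \mathfrak{q}_P$, giving $\mathfrak{q}_{\{i\}} \cap B_{\mathcal{P}} = \mathfrak{q}_P$, which is the claim.

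There is no serious obstacle here: the content is entirely the identification $B_{\mathcal{P}} \cong k^{|\mathcal{P}|}$ via orthogonal idempotents, after which both parts reduce to bookkeeping with the basis $\{1_P\}$. The only point requiring a touch of care is confirming that the $\mathfrak{q}_P$ exhaust all primes of $B_{\mathcal{P}}$, that is, that there are no further points, which is immediate once we know $B_{\mathcal{P}}$ is a finite product of fields.
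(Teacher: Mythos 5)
Your proof is correct, and in fact the paper omits the proof of this lemma entirely ("Omitted."), so your argument fills that gap rather than diverging from anything. The route you take --- identifying $B_{\mathcal{P}} \cong k^{|\mathcal{P}|}$ via the complete system of orthogonal idempotents $\{1_P\}$, checking $1_{P^c} \cdot B_{\mathcal{P}} = \Span\{1_Q \mid Q \neq P\}$, and computing the contraction $\mathfrak{q}_{\{i\}} \cap B_{\mathcal{P}} = \mathfrak{q}_P$ --- is precisely the standard bookkeeping the author evidently considered routine, and all three steps are carried out accurately.
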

\begin{proof}
Omitted.
\end{proof}

For any $\delta$ and any tuple $\vec{c}$, $\Ter^\delta_{A(\vec{c})}$ is very nearly a disjoint union of products of territories of $A^+(\vec{c})$.

\begin{proposition} (\cite[Theorem 3.10]{bozlee_guevara_smyth}) \label{prop:ter_as_union_of_ter_of_seminormalizations}
There is an infinitesimal thickening
\[
  \coprod_{\mathcal{P}, g} \left( \prod_{P \in \mathcal{P}} \Ter^{g(P)}_{A^+(\vec{c}|_P)} \right) \to \Ter^\delta_{A(\vec{c})}
\]
where the disjoint union varies over the partitions $\mathcal{P}$ of $\{1, \ldots, m\}$ together with functions $g : \mathcal{P} \to \NN$ such that $\sum_{P \in \mathcal{P}} (g(P) + |P| - 1) = \delta$.

Moreover, if $B$ is any $k$-point of the summand indexed by $(\mathcal{P},g)$, then the induced map $\Spec A(\vec{c}) \to \Spec B$ identifies points $\mathfrak{q}_i$ and $\mathfrak{q}_j$ if and only if $i, j$ belong to
 the same part of the partition $\mathcal{P}.$
\end{proposition}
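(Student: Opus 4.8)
The plan is to realize the stated isomorphism as an isomorphism of functors, by first pinning down the bijection on $k$-points (the combinatorial heart) and then promoting it to arbitrary bases. On a geometric point, a corank-$\delta$ subalgebra $B\subseteq A(\vec{c})=\prod_i k[t_i]/(t_i^{c_i})$ is governed by its idempotents: since each factor is local, the only idempotents of $A(\vec{c})$ are the $1_T$ with $T\subseteq\{1,\dots,m\}$, and the collection of $T$ with $1_T\in B$ is closed under intersection and complement, so by the argument of Lemma \ref{lem:subalges_of_kn} it is exactly the set of unions of blocks of a unique partition $\mathcal{P}$. The primitive idempotents $1_P$, $P\in\mathcal{P}$, then split $B=\prod_{P\in\mathcal{P}}1_PB$ with each block $B_P:=1_PB$ a local subalgebra of $A(\vec{c}|_P)$. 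Applying the constant-term homomorphism $A(\vec{c}|_P)\to k^{|P|}$ to the \emph{local} ring $B_P$ yields a local subalgebra of $k^{|P|}$, which by Lemma \ref{lem:subalges_of_kn}(ii) must be $k\cdot(1,\dots,1)$; hence $B_P\subseteq A^+(\vec{c}|_P)$. Writing $g(P)$ for the corank of $B_P$ in $A^+(\vec{c}|_P)$ and using that $A^+(\vec{c}|_P)$ has corank $|P|-1$ in $A(\vec{c}|_P)$, the coranks add up to $\sum_P\bigl(g(P)+|P|-1\bigr)=\delta$. This gives the $k$-point bijection $B\leftrightarrow(\mathcal{P},(B_P)_P)$.

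Next I would build the map of schemes in the easy direction. A point of the summand indexed by $(\mathcal{P},g)$ is a tuple of families $\mathscr{R}_P\subseteq f^*A^+(\vec{c}|_P)$ of coranks $g(P)$ over $S$; grouping the factors of $f^*A(\vec{c})=\prod_{P\in\mathcal{P}}f^*A(\vec{c}|_P)$ according to $\mathcal{P}$, these assemble to $\prod_P\mathscr{R}_P\subseteq f^*A(\vec{c})$, a family of subalgebras of corank $\delta$. This is natural in $S$ and injective, and two distinct pairs $(\mathcal{P},g)$ produce subalgebras of different combinatorial type (the type is recovered from $B$ as its idempotent pattern together with the block coranks), so the summands have disjoint images. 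By Lemma \ref{lem:ter_representability} this is a monomorphism $\Phi$ of projective $k$-schemes into $\Ter^\delta_{A(\vec{c})}$.

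To invert $\Phi$ I must decompose an arbitrary family $\mathscr{B}\subseteq f^*A(\vec{c})$ over $S$, and this is where the only real difficulty lies. For each $T$, the locus $U_T\subseteq S$ where $1_T\in\mathscr{B}$ is the zero locus of the image of $1_T$ in the quotient $f^*A(\vec{c})/\mathscr{B}$ (whose formation commutes with base change, being locally free), hence is closed; the crux --- and the step I expect to be the main obstacle --- is that $U_T$ is also \emph{open}, so that the full combinatorial type $(\mathcal{P},g)$ is locally constant. Local freeness of the corank is essential here: a family that glued branches only over a non-open locus would destroy flatness of the quotient, as one already sees for $\{(f,g):f(0)=g(0)\}\subseteq k[t]\times k[t]$, whose corank is the non-flat module $k[t]/(t)$. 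I would establish openness by reducing to a local base $R=\OO_{S,s}$ and showing that an idempotent $1_T$ present in the closed fiber $\mathscr{B}\otimes k(s)$ already lies in $\mathscr{B}_R$ --- the freeness of the quotient forces the splitting of $\mathscr{B}$ along $1_T$ to be compatible with base change, so $1_T\in\mathscr{B}$ on a neighborhood. The same mechanism shows each block $1_P\mathscr{B}$ actually lands in $f^*A^+(\vec{c}|_P)$ as a family (not merely fiberwise) and has locally constant corank $g(P)$, refining the clopen decomposition so that $(\mathcal{P},g)$ is constant on each piece. Granting this, the sections $1_P$ lie in $\mathscr{B}$ on each piece and split $\mathscr{B}=\prod_P 1_P\mathscr{B}$, which inverts $\Phi$; the two constructions are visibly mutually inverse, since both merely split and reassemble along the idempotents $1_P$, and both respect pullback.

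Finally, the ``moreover'' clause is immediate from Lemma \ref{lem:subalges_of_kn_maps}. For a $k$-point $B=\prod_{P}B_P$ in the summand $(\mathcal{P},g)$, each local factor $B_P$ contributes a single point of $\Spec B$, so $\Spec B=\coprod_{P\in\mathcal{P}}\Spec B_P$, and the induced map $\Spec A(\vec{c})\to\Spec B$ sends $\mathfrak{q}_i$ to the point coming from the block $P$ with $i\in P$. Hence $\mathfrak{q}_i$ and $\mathfrak{q}_j$ have the same image precisely when $i$ and $j$ lie in the same part of $\mathcal{P}$, as claimed.
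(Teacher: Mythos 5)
Your $k$-point analysis, the assembly map $\Phi$, and the ``moreover'' clause are all correct, and on points your argument coincides with the paper's (the paper handles the scheme-theoretic isomorphism by citing \cite[Theorem 3.8]{bozlee_guevara_smyth} and writes out only the $k$-point bijection, precisely to extract the ``moreover'' statement). The genuine gap is at the step you yourself flag as the crux: the claim that an idempotent $1_T$ lying in the closed fiber $\mathscr{B}\otimes k(s)$ already lies in $\mathscr{B}_{\OO_{S,s}}$. Your justification --- ``the freeness of the quotient forces the splitting of $\mathscr{B}$ along $1_T$ to be compatible with base change'' --- is not an argument but a restatement of the desired conclusion, and the general principle it appeals to is false: a finite flat algebra over a local ring can have disconnected closed fiber while admitting no nontrivial idempotent at all. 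For instance, over $R=k[x]_{(x)}$ the free rank-$2$ algebra $C=R[y]/(y^2-y+x)$ is a semilocal domain (it is the localization of $k[y]$ at the two primes $(y)$ and $(y-1)$), yet its closed fiber is $k[y]/(y^2-y)\cong k\times k$. So flatness/freeness of quotients alone cannot force $1_T$ to lift; any correct proof must use the ambient embedding $\mathscr{B}\subseteq f^*A(\vec{c})$ and the structure of its fibers, which your sketch never invokes at this step.

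An actual argument here has real content. Over a DVR $R$ with uniformizer $x$, for example: choose $b\in\mathscr{B}$ lifting $1_T$, so $b=1_T+xa$ with $a\in A(\vec{c})\otimes R$; if a part $P$ of the generic-fiber partition met both $T$ and $T^c$, then $1_Pb$ lies in the block $B_K1_P$, whose constant terms are the diagonal $K\cdot 1_P$, and comparing coordinates $i\in P\cap T$, $j\in P\setminus T$ yields $1+x\bar{a}_i=x\bar{a}_j$ with $\bar{a}_i,\bar{a}_j\in R$, i.e.\ $1\in xR$, a contradiction; this proves stability of $U_T$ under generization, whence $U_T$ is clopen on a locally Noetherian base. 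Even granting that, you are not done: over a non-reduced base, $1_T$ lying in every fiber only makes its image in $f^*A(\vec{c})/\mathscr{B}$ a nilpotent-valued section, not the zero section, so a further (e.g.\ infinitesimal) argument is needed before you may conclude $1_T\in\mathscr{B}$ and split $\mathscr{B}=\prod_P 1_P\mathscr{B}$ as families. This missing content is exactly what \cite[Theorem 3.8]{bozlee_guevara_smyth} supplies and what the paper chooses to cite rather than reprove; as written, your proof asserts it rather than proves it.
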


The map is given by taking a tuple $(B_P)_{P \in \mathcal{P}} \in \prod_{P \in \mathcal{P}} \Ter^{g(P)}_{A^+(\vec{c}|_P)}$ to $\prod_{P \in \mathcal{P}} B_P$, modulo renumbering $t_i$'s.


\medskip

Let us summarize for $\tilde{X}$ a smooth, possibly disconnected, curve and $Z$ a closed subscheme consisting of points $q_1, \ldots, q_m$ with respective multiplicities $c_1, \ldots, c_m$.
Suppose $\nu : \tilde{X} \to X$ is a $k$-point of $\Ter^\delta_{Z/\tilde{X}}$. Then $\nu$ is the normalization of $X$. The infinitesimal thickening
\[
  \coprod_{\mathcal{P}, g} \left( \prod_{P \in \mathcal{P}} \Ter^{g(P)}_{A^+(\vec{c}|_P)} \right) \to \Ter^\delta_{A(\vec{c})} \cong \Ter^\delta_{Z/\tilde{X}}
\]
is a bijection on $k$-points. 
Let $\mathcal{P}$ be the partition and $g : \mathcal{P} \to \ZZ_{\geq 0}$ the genus function indexing the summand corresponding to $\nu$, and let $(x_P)_{P \in \mathcal{P}} \in \prod_{P \in \mathcal{P}} \Ter^{g(P)}_{A^+(\vec{c}|_P)}$ be the point of the summand corresponding to $\nu$. Then by Proposition \ref{prop:ter_as_union_of_ter_of_seminormalizations}, $\nu$ identifies points $q_i$ and $q_j$ where $i$ and $j$ belong to the same part of the partition $\mathcal{P}$, leaving $X$ with a singularity for each part of $\mathcal{P}$. Moreover, the singularity corresponding to a part $P \in \mathcal{P}$ has genus $g(P)$. Its local ring can be found by taking the $P$-coordinate $x_P$ and applying Lemmas \ref{lem:power_series_to_finite} and \ref{lem:curve_sings_to_power_series}.

\section{Connectedness}

\begin{definition}
Let $n_1, \ldots, n_m$ positive integers. Let $X_{n_1,\ldots,n_m}$ denote the singularity $\Spec k \oplus t_1^{n_1}k\llbracket t_1 \rrbracket \oplus \cdots \oplus t_m^{n_m}k\llbracket t_m \rrbracket$.
\end{definition}

The singularities $X_{n_1,\ldots,n_m}$ are known variously as universal singularities or partition singu\-larities \cite{stevens_versal_deformation}. We remark that $X_{n_1,\ldots, n_m}$ has $m$ branches, branch conductances $n_1$,$\ldots$, $n_m$, genus $n_1 + \cdots + n_m - m$, and delta invariant $n_1 + \cdots + n_m - 1$. We also remark that $X_{n_1, \ldots, n_m}$ is the transverse union of the unibranch singularities $X_{n_i} = \Spec k[t^i : i \geq n_i ]$ for $i = 1,\ldots,m.$

\begin{lemma} \label{lem:unibranch_partition_curves_smoothable}
For any $n_1,\ldots,n_m$, the singularity $X_{n_1,\ldots,n_m}$ is smoothable. 
\end{lemma}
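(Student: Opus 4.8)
The plan is to reduce to the unibranch case and then smooth each unibranch factor by an explicit determinantal deformation. Recall from the preceding remark that $X_{n_1,\ldots,n_m}$ is the transverse union of the unibranch singularities $X_{n_i} = \Spec k[t^{n_i}, t^{n_i+1}, \ldots, t^{2n_i-1}]$, glued at a single point with independent tangent directions (which we may always arrange by placing the branches in complementary coordinate blocks of a large affine space $\AA^N$). I would therefore first reduce the statement to two claims: (a) each $X_{n_i}$ is smoothable, and (b) a transverse union, in general position, of smoothable singularities is again smoothable. In carrying out (b) one must produce a \emph{connected} smoothing, so that the flat limit stays reduced: merely translating the smoothed branches apart into disjoint curves introduces an embedded point at the origin (as already the two skew lines degenerating to a planar pair of axes shows) and fails to recover $X_{n_1,\ldots,n_m}$ as the special fiber.

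For the unibranch singularity $X_n$, write $x_i = t^{n+i}$ for $i = 0, \ldots, n-1$. The key point is that $X_n$ is determinantal: it is the locus where the $2\times n$ matrix
\[
  M_n = \begin{pmatrix} x_0 & x_1 & \cdots & x_{n-2} & x_{n-1} \\ x_1 & x_2 & \cdots & x_{n-1} & x_0^2 \end{pmatrix}
\]
has rank at most $1$, its bottom row being the valuation shift $t \cdot (\text{top row})$. One checks directly that the $2\times 2$ minors vanish precisely on $X_n$ (on the curve the rows are proportional with ratio $t$, and the origin is the only further solution) and that this locus has the expected codimension $n-1$ in $\AA^n$. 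Hence $X_n$ is arithmetically Cohen--Macaulay, the minors cut it out as a reduced scheme, and the Eagon--Northcott complex resolves its structure sheaf.

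To smooth $X_n$ I would perturb the matrix: set $M_s = M_n + sL$ for a generic $2\times n$ matrix $L$ of affine-linear forms, and let $X_s$ be the rank-$\le 1$ locus of $M_s$. Since the top row of $M_s$ already gives a closed embedding $\AA^n \hookrightarrow \AA^{2n}$ into matrix space, $X_s$ is the preimage of the cone of rank-$\le 1$ matrices, a cone over a Segre variety that is smooth away from its vertex; a Bertini/genericity argument then shows that for generic $s \neq 0$ the curve $X_s$ is smooth and misses the vertex. Because the rank locus keeps the expected codimension $n-1$ for all $s$, the Eagon--Northcott complex resolves the total family uniformly, so the Hilbert polynomial is constant, the family is flat over $\AA^1_s$, the total space is Cohen--Macaulay with no component in the fiber $s = 0$, and the special fiber is exactly the reduced $X_n$. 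This exhibits $X_n$ as smoothable; the degenerate cases $n = 1$ (smooth) and $n = 2$ (the planar cusp $x_1^2 = x_0^3$) are immediate.

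The main obstacle is step (b) of the reduction — assembling the individual branch smoothings, together with a smoothing of the ordinary $m$-fold point, into a single flat family whose generic fiber is a \emph{connected} smooth curve and whose special fiber is the reduced transverse union $X_{n_1,\ldots,n_m}$. This is where the genuine deformation-theoretic content lies, since naive separation of branches creates embedded points; I expect to handle it in the formal/local category, combining the branch smoothings (which live in disjoint variable blocks) with a merging deformation of the multiple point, exploiting the quasi-homogeneity of these singularities. By contrast, the unibranch determinantal smoothing above is routine once the determinantal description of $X_n$ — that the displayed minors cut out $X_n$ reduced and of codimension $n-1$ — has been verified.
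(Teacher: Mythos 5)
Your argument has a genuine gap at exactly the step you yourself flag as the ``main obstacle'': claim (b), that a transverse union of smoothable singularities admits a smoothing whose special fiber is the \emph{reduced} union, is never proved, and it is not a routine assembly step. Combining smoothings of the individual branches $X_{n_i}$ with a smoothing of the ordinary $m$-fold point into a single flat family is a genuinely delicate problem in the deformation theory of curve singularities (as you correctly observe, naive separation of branches creates embedded points), and there is no general citable statement that wedges of smoothable singularities are smoothable. Since the multibranch case is the actual content of the lemma, your write-up establishes at best the unibranch case $m=1$, and even that only in characteristic zero: your determinantal smoothing of $X_n$ (whose determinantal presentation by the $2\times n$ matrix with last entry $x_0^2$ is indeed correct) relies on a Bertini/generic-transversality argument, which fails in positive characteristic; the paper works over an arbitrary algebraically closed field and needs all characteristics, since Theorem 1.1 concerns geometric connectedness of every fiber of $\UU_{g,n} \to \Spec \ZZ$.

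The paper avoids the gluing problem entirely by deforming the \emph{gluing data} rather than smoothing the branches separately. With $R = k[x_{i,j} : i = 1,\ldots,m,\ j = 1,\ldots,n_i]$, it considers the subring $B = R\cdot 1 \oplus \bigoplus_{i=1}^m \langle (t_i - x_{i,1})\cdots(t_i - x_{i,n_i})\rangle$ of $A = \prod_{i=1}^m R[t_i]$; the quotient $A/B$ is free of rank $n_1+\cdots+n_m-1$ over $R$, so $\Spec B \to \Spec R$ is flat, its fiber over the origin is $X_{n_1,\ldots,n_m}$, and its generic fiber has a single \emph{ordinary} $(n_1+\cdots+n_m)$-fold point, obtained by gluing the distinct points $t_i = x_{i,j}$ transversally. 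Smoothability then follows from two classical facts: ordinary multiple points are smoothable, and a limit of smoothable singularities is smoothable. This one explicit family handles all branches simultaneously, works in every characteristic, and requires none of the determinantal machinery; if you want to salvage your approach, the missing wedge step is precisely what you would have to supply, and the paper's construction shows the efficient way around it.
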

\begin{proof}
It suffices to show that $X_{n_1,\ldots,n_m}$ is a limit of smoothable singularities.
Let $R = k[x_{i,j} : i = 1,\ldots, m, j = 1, \ldots, n_i]$ and consider the subring
\[
  B = R\cdot 1 \oplus \bigoplus_{i=1}^m \langle (t_i - x_{i,1}) \cdots (t_i - x_{i,n_i}) \rangle \quad \text{of} \quad A = \prod_{i=1}^m R[t_i],
\]
where $\langle (t_i - x_{i,1}) \cdots (t_i - x_{i,n_i}) \rangle$ denotes the ideal generated by $(t_i - x_{i,1}) \cdots (t_i - x_{i,n_i})$ in $R[t_i]$. The quotient $A/B$ is a finite free $R$-module of rank $n_1 + \cdots + n_m - 1$. (A basis
is given by the images of $x_i, \ldots, x_i^{n_i - 1}$ and $(1,0,\ldots, 0), \ldots, (0, \ldots, 0, 1, 0)$.) It follows $\Spec B \to \Spec R$ is a flat family of affine curves. The generic fiber of $\Spec B \to \Spec R$ has one singularity obtained by gluing the points $t_i = x_{i,j}$ to a single point transversally. This is an ordinary $n_1 + \cdots + n_m$-fold point (i.e., the same singularity as that formed by the coordinate axes in $\AA^{n_1 + \cdots + n_m}$.) Ordinary $n_1 + \cdots +n_m$-fold points are smoothable singularities. It follows that the singularity $X_{n_1,\ldots,n_m}$, which appears in the fiber over the point where all $x_{i,j} = 0$, is a limit of smoothable singularities, as desired.
\end{proof}

\begin{corollary} \label{cor:there_is_a_smoothable_singularity}
Each non-empty $\Ter^g_{A^+(\vec{c})}$ has a $k$-point corresponding to a smoothable singularity.
\end{corollary}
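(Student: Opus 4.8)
The plan is to exhibit, in each nonempty $\Ter^g_{A^+(\vec{c})}$, one of the partition singularities $X_{n_1, \ldots, n_m}$ already shown to be smoothable in Lemma \ref{lem:unibranch_partition_curves_smoothable}, by choosing the branch conductances $n_i$ so that the singularity's invariants match those forced by membership in this particular territory. Since Lemma \ref{lem:unibranch_partition_curves_smoothable} supplies the smoothability for free, the entire task reduces to a bookkeeping of invariants.

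First I would recall, from the discussion following Lemma \ref{lem:ter_representability} (via Lemmas \ref{lem:curve_sings_to_power_series} and \ref{lem:power_series_to_finite}), that a reduced curve singularity with $m$ branches, genus $g$, and branch conductances $n_1, \ldots, n_m$ satisfying $n_i \leq c_i$ for all $i$ is represented by a $k$-point of $\Ter^g_{A^+(\vec{c})}$. Thus it suffices to realize the invariant data (branch count $m$, genus $g$, conductances bounded by $\vec{c}$) by a partition singularity. I would then pin down the range of $g$ for which the territory can be nonempty: since $\dim_k A^+(\vec{c}) = c_1 + \cdots + c_m - m + 1$ and every $k$-subalgebra contains $k \cdot (1, \ldots, 1)$, a codimension-$g$ subalgebra can exist only when $0 \leq g \leq c_1 + \cdots + c_m - m$. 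So nonemptiness of $\Ter^g_{A^+(\vec{c})}$ forces $g$ into exactly this interval.

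Under this constraint I would choose positive integers $n_1, \ldots, n_m$ with $n_i \leq c_i$ and $\sum_{i=1}^m n_i = g + m$. Such a choice exists because the sums $\sum_i n_i$ attainable subject to $1 \leq n_i \leq c_i$ are precisely the integers in the interval $[m, \, c_1 + \cdots + c_m]$ (one may increment coordinates one at a time), and the bound on $g$ places $g + m$ inside this interval. The partition singularity $X_{n_1, \ldots, n_m}$ then has $m$ branches, branch conductances $n_1, \ldots, n_m$ (each at most the corresponding $c_i$), and genus $\sum_i n_i - m = g$, so it defines a $k$-point of $\Ter^g_{A^+(\vec{c})}$; and it is smoothable by Lemma \ref{lem:unibranch_partition_curves_smoothable}, completing the argument.

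I expect no genuine obstacle here. The one point meriting care is the compatibility step: confirming that the nonemptiness hypothesis forces $g$ into exactly the range in which $g + m$ is realizable as a sum of branch conductances bounded by $\vec{c}$, and that the genus and conductance conventions for $X_{n_1, \ldots, n_m}$ (recorded in the remark preceding Lemma \ref{lem:unibranch_partition_curves_smoothable}) line up precisely with those of points of $\Ter^g_{A^+(\vec{c})}$.
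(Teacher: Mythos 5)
Your proof is correct and takes essentially the same approach as the paper: both arguments realize the required point as a partition singularity $X_{n_1,\ldots,n_m}$ with $1 \le n_i \le c_i$ and $\sum_{i=1}^m n_i = g + m$, whose ring contains $(t_1^{c_1},\ldots,t_m^{c_m})$, and then invoke Lemma \ref{lem:unibranch_partition_curves_smoothable} for smoothability. The only cosmetic difference is that you justify the necessary bound $g \le \sum_{i=1}^m (c_i - 1)$ by an elementary dimension count (every subalgebra of $A^+(\vec{c})$ contains $k \cdot (1,\ldots,1)$), whereas the paper cites Altman--Kleiman for the equivalent inequality $\delta + 1 \le c$.
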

\begin{proof}
In order that $\Ter^g_{A^+(\vec{c})}$ be non-empty, it is necessary that $\delta + 1 \leq c$ \cite[Chapter VIII, Proposition 1.16]{altman_kleiman}.
Suppose $g$ as a non-negative integer and $\vec{c} = (c_1,\ldots,c_m)$ is a vector of positive integers with sum $c$ such that $\delta + 1 = g + m \leq c$. It follows $g \leq \sum_{i=1}^m (c_i - 1)$. As the latter is a sum of non-negative integers, we can find integers $n_1,\ldots, n_m$ with $1 \leq n_i \leq c_i$ for all $i$ such that $g = \sum_{i=1}^m (n_i - 1)$. Then $X_{n_1,\ldots,n_m}$ is a smoothable, $m$-branch singularity with genus $g$, and its ring of regular functions $k \oplus t_1^{n_1}k\llbracket t_1 \rrbracket \oplus \cdots \oplus t_m^{n_m}k\llbracket t_m \rrbracket$ contains the ideal $(t_1^{c_1}, \ldots, t_m^{c_m})$. Quotienting by this ideal yields a subalgebra of $A^+(\vec{c})$ corresponding to $X_{n_1,\ldots,n_m}$. This is the required point of $\Ter^g_{A^+(\vec{c})}.$
\end{proof}

\bigskip

We now prove Theorem \ref{thm:connected}.

\begin{proof}
Let $x = (X, p_1,\ldots,p_n) \in \mathcal{U}_{g,n}(k)$ be an arbitrary reduced, connected, proper $n$-pointed curve over $k$. Let $\nu : \tilde{X} \to X$ be the normalization of $X$, and let $\tilde{p}_1,\ldots, \tilde{p}_n \in \tilde{X}$ be lifts of $p_1, \ldots, p_n$. Let $Z = V(\Cond_{\tilde{X}/X}) \subseteq \tilde{X}$ be the vanishing of the conductor ideal, and let $q_1,\ldots,q_m$ be the points of its support with respective multiplicities $c_1, \ldots, c_m$. Observe that $\tilde{\pi}_*\OO_Z \cong A(\vec{c})$. Let $\delta = \dim_k(\nu_*\OO_{\tilde{X}}/\OO_X).$

By Lemma \ref{lem:global_territory_to_local_territory}, the tuple $(\nu : \tilde{X} \to X, \tilde{p}_1,\ldots,\tilde{p}_n)$ determines a $k$-point $\tilde{x}$ of $\Ter^\delta_{Z/\tilde{X}} \cong \Ter^\delta_{A(\vec{c})}$. By Proposition \ref{prop:ter_as_union_of_ter_of_seminormalizations}, $\tilde{x}$ factors through an open and closed subscheme $T$ isomorphic to $\prod_{P \in \mathcal{P}} \Ter^{g(P)}_{A^+(\vec{c}|_{P})}$ for some partition $\mathcal{P}$ of
$\{1,\ldots, m\}$ and function $g : \mathcal{P} \to \NN$. We know by Lemma \ref{lem:territory_connected} that $T$ is connected.

Write $\nu_T : \tilde{X}_T \to X_T$ for the family of equinormalized curves obtained by restricting the universal element of $\Ter^\delta_{Z/\tilde{X}}$ to $T$. The family $\pi_T : X_T \to T$ is flat, proper, and of finite presentation by Proposition \ref{prop:X_to_S_is_nice}. Since $X_T|_{\tilde{x}} = X$ is connected and $T$ is connected, all geometric fibers of $\pi_T$ are connected. Therefore $\pi_T : X_T \to T$ determines a morphism $\phi : T \to \mathscr{U}_{g,n} \times \Spec k$.

The image $\phi(T)$ is connected, since $T$ is connected, and contains $x = \phi(\tilde{x})$ by construction. Since $\moduli_{g,n} \times \Spec k$ is irreducible \cite{deligne_mumford}, its closure in $\UU_{g,n} \times \Spec k$ is connected. By Corollary \ref{cor:there_is_a_smoothable_singularity}, $\phi(T)$ contains a point $y$ corresponding to a curve with only smoothable singularities. Since a curve is smoothable if and only if it has smoothable singularities \cite[Corollary 29.10]{hartshorne_deformation_theory}, $y$ is in the closure of $\moduli_{g,n} \times \Spec k$. Then $\phi(T)$ and the closure of $\moduli_{g,n} \times \Spec k$ have the point $y$ in common, so the union of $\phi(T)$ and the closure of $\moduli_{g,n} \times \Spec k$ is connected. Since $x$ was arbitrary, every point of $\UU_{g,n} \times \Spec k$ belongs to the same connected component as $\moduli_{g,n} \times \Spec k$. As each connected component of an algebraic stack locally of finite presentation over $k$ possesses a $k$ point, it follows that the fibers of $\UU_{g,n} \to \Spec \ZZ$ are geometrically connected. Since $\moduli_{g,n}$ is connected and intersects each fiber, $\UU_{g,n}$ itself is also connected.
\end{proof}

\bibliographystyle{amsalpha}
\bibliography{bibliography}

@article{deligne_mumford,
  author = {Deligne, Pierre and Mumford, David},
  title = {The irreducibility of the space of curves of given genus},
  journal = {Publications Math\'ematiques de l'IH\'ES},
  volume = 36,
  issue = 1,
  pages = {75-109},
  year = 1969,
  month = {January}
}

@article{hassett_weighted_curves,
  author = {Hassett, Brendon},
  title = {Moduli spaces of weighted pointed stable curves},
  journal ={Advances in Mathematics},
  volume = {173},
  year = {2003},
  pages = {316-352}
  }

@article{hassett_hyeon,
  author = {Hassett, Brendon and Hyeon, Donghoon},
  title = {Log canonical models for the moduli space of curves: first divisorial contraction},
  journal = {Transactions of the American Mathematical Society},
  volume = 361,
  number = 8,
  month = {August},
  year = {2009},
  pages = {4471-4489}
}

@article{ishii_moduli_subrings,
  author = {Ishii, Shihoko},
  title = {Moduli of Subrings of a Local Ring},
  journal = {Journal of Algebra},
  volume = 67,
  year = 1980,
  pages = {504-516}
}

@article{schubert_pseudostable,
  author={Schubert, David},
  title={A new compactification of the moduli space of curves},
  journal={Compositio Mathematica},
  volume={78},
  number={3},
  pages={297-313},
  year={1991}
}

@article{smyth_zstable,
   author = {Smyth, David},
   title = {Towards a classification of modular compactifications of $\mathscr{M}_{g,n}$},
   journal = {Inventiones Mathematicae},
   volume = {192},
   number = {2},
   year = {2013},
   month={May},
   pages={459-503}
}

@book{altman_kleiman,
    AUTHOR = {Altman, Allen and Kleiman, Steven},
     TITLE = {Introduction to {G}rothendieck duality theory},
    SERIES = {Lecture Notes in Mathematics, Vol. 146},
 PUBLISHER = {Springer-Verlag, Berlin-New York},
      YEAR = {1970},
     PAGES = {ii+185},
   MRCLASS = {14.55},
  MRNUMBER = {0274461},
MRREVIEWER = {Y. Nakai},
}

@misc{stacks-project,
    shorthand    = {Stacks},
    author       = {The {Stacks Project Authors}},
    title        = {\textit{Stacks Project}},
    howpublished = {\url{https://stacks.math.columbia.edu}},
    year         = {2019}
}

@article{stevens_versal_deformation,
  author = {Stevens, Jan},
  title = {The versal deformation of universal curve singularities},
  journal = {Abh. Math. Semin. Univ. Hambg.},
  volume = 63,
  pages = {197--213},
  year = {1993},
  url = {{https://doi.org/10.1007/BF02941342}},
  doi = {10.1007/BF02941342}
}

@article{fedorchuk_smyth,
    author = {Maksym Fedorchuk and David Ishii Smyth},
    title = {Alternate compactifications of moduli spaces of curves},
    year = {2010},
    journal = {Handbook of Moduli},
    volume = 24,
    pages = {331--414},
    publisher = {International Press},
}

@article{bozlee_guevara_smyth,
  author = {Bozlee, Sebastian and Guevara, Christopher and Smyth, David Ishii},
  title = {A stratification of moduli of arbitrarily singular curves},
  journal = {arXiv e-prints},
  year={2024},
  pages = {arXiv:2307.10013},
  eprint={2307.10013},
  archivePrefix={arXiv},
  primaryClass={math.AG},
  url={https://arxiv.org/abs/2307.10013},
  doi = {10.48550/arXiv.2307.10013}
}

@article{mumford_pathologies,
  author = {Mumford, David},
  title = {{Pathologies IV}},
  year = {1975},
  journal = {American Journal of Mathematics},
  volume = {97},
  number = {3},
  pages = {847--849},
  url = {{http://dx.doi.org/10.2307/2373780}},
  doi = {10.2307/2373780}
}

@article{pinckham_thesis,
  author = {Pinkham, Henry},
  title = {Deformations of algebraic varieties with $\mathbb{G}_m$ action},
  journal = {Ast\'erisque},
  number = {20},
  year = {1974},
  pages = {1--140},
  url = {{http://numdam.org/item/AST_1974__20__1_0/}}
}

@book{hartshorne_deformation_theory,
  author = {Hartshorne, Robin},
  title = {Deformation Theory},
  publisher = {Springer},
  address = {New York, NY},
  year = {2010},
  doi = {10.1007/978-1-4419-1596-2},
  url = {https://doi.org/10.1007/978-1-4419-1596-2}
}

\end{document}